\documentclass[a4paper,11pt,oneside]{article} 
\usepackage{amsfonts,amssymb}
\usepackage{color} \usepackage{mathrsfs} \let\mathcal\mathscr
\usepackage[all,ps,cmtip]{xy}
 \usepackage[top=1in, bottom=1in, left=1in, right=1in]{geometry}

\title{\sc Hodge numbers and Hodge structures for Calabi-Yau categories of dimension three}
\author{\sc Roland Abuaf \footnote{A. Institut de Math\'ematiques, Rectorat de Paris, 47 rue des \'Ecoles, 75005 Paris. Email : \textit{rabuaf@gmail.com, Roland.Abuaf@ac-paris.fr}}}

\footskip = 20pt
\headheight = 0pt

\usepackage[dvips]{graphicx}

\usepackage{amsfonts,amssymb}
\usepackage{color} \usepackage{mathrsfs} \let\mathcal\mathscr
\usepackage{mathenv}
\usepackage[all,ps,cmtip]{xy}

\DeclareGraphicsExtensions{eps}
\DeclareGraphicsExtensions{pdf}

\frenchspacing
\usepackage[T1]{fontenc}
\usepackage{amssymb}

\usepackage{amsmath}
\usepackage{latexsym}

\pagestyle{plain}
\parindent .6cm

\usepackage[latin1]{inputenc}

\newtheorem{theo}{Theorem}[subsection]

\newtheorem{hypo}[theo]{Hypothesis}
\newtheorem{exem}[theo]{Example}
\newtheorem{rem}[theo]{Remark}

\newtheorem{prop}[theo]{Proposition}
\newtheorem{quest}[theo]{Question}

\newtheorem{defi}[theo]{Definition}

\newtheorem{cor}[theo]{Corollary}

\def\DB{\mathrm{D^{b}}}
\def\OO{\mathcal{O}}

\def\R0{\mathrm{R^{0}}}

\def\HH{\mathrm{HH}}

\def\HHH{\mathrm{Hom}}

\def\OO{\mathcal{O}}

\def\D{\mathcal{D}}

\def\E{\mathcal{E}}

\def\C{\mathcal{C}}
\def\cc{\mathrm{c}}
\def\F{\mathcal{F}}
\def\GG{\mathrm{G}}
\def\G{\mathcal{G}}

\def\A{\mathcal{A}}

\def\X{\mathcal{X}}

\def\ds{\displaystyle}

\newcommand{\hooklongrightarrow}{\lhook\joinrel\longrightarrow}
\newcommand{\leftexp}[2]{{\vphantom{#2}}^{#1}{#2}}

\newenvironment{proof}
{
\noindent
\textit{\underline{Proof}} :\\
$\blacktriangleright\;$%
}
{\hspace{\stretch{1}}%
$\blacktriangleleft$}

\begin{document}

\maketitle

\begin{abstract}
Let $\A$ be a smooth proper $\mathbb{C}$-linear triangulated category Calabi-Yau of dimension $3$ endowed with a (non-trivial) rank function. Using the homological unit of $\A$ with respect to the given rank function, we define Hodge numbers for $\A$. 

If the classes of unitary objects generate the complexified numerical $\mathrm{K}$-theory of $\A$ (hypothesis satisfied for many examples of smooth proper Calabi-Yau categories of dimension $3$), it is proved that these numbers are independent of the chosen rank function : they are intrinsic invariants of the triangulated category $\A$.

In the special case where $\A$ is a semi-orthogonal component of the derived category of a smooth complex projective variety and the homological unit of $\A$ is $\mathbb{C} \oplus \mathbb{C}[3]$ (that is $\A$ is strict Calabi-Yau with respect to the rank function), we define a Hodge structure on the Hochschild homology of $\A$. The dimensions of the Hodge spaces of this structure are the Hodge numbers aforementioned.

Finally, we give some numerical applications toward the Homological Mirror Symmetry conjecture for cubic sevenfolds and double quartic fivefolds.
\end{abstract}

\vspace{\stretch{1}}

\newpage
\tableofcontents
\begin{section}{Introduction}

As part of his Homological Mirror Symmetry conjecture (see \cite{Kontsevich}) M. Kontsevich predicted that the Hochschild homology (or rather the cyclic homology) of a smooth proper triangulated category should be endowed with a Hodge structure. It has been suggested that this Hodge structure could be obtained via the degeneration of a certain spectral sequence, \`a la Deligne-Illusie (\cite{KKP, Kaledin, KKP2}). This approach looks however rather abstract and it seems difficult to use it in practice to compute Hodge numbers for a given example of Calabi-Yau category.

\bigskip

In this paper, we use the more concrete (though less functorial, at first sight) theory of homological units (see \cite{homounit}) in order to define (and compute) Hodge numbers for smooth proper $\mathbb{C}$-linear triangulated category which are Calabi-Yau of dimension $3$. Our main definition and our main result are (see definition \ref{defihodgenumbers} and Theorem \ref{invahodgenumbers}):

\begin{defi} \label{defihodgenumbers}
Let $\A$ be a smooth proper triangulated category which is Calabi-Yau of dimension $3$ and endowed with a non-trivial rank function. Let $\mathfrak{T}_{\A}^{\bullet}$ be a homological unit for $\A$ with respect to the rank function. We define the \textit{Hodge numbers} of $\A$ as:
\begin{enumerate}
\item for all $i \in [0, \ldots, 3]$, $h^{i,0}(\A) = \mathfrak{T}_{\A}^{3-i}$,
\item $h^{3,1}(\A) = \dim \HH_{-2}(\A) - h^{2,0}(\A)$,
\item $h^{3,2}(\A) = h^{1,0}(\A)$ and $h^{2,1}(\A) = \dim \HH_{-1}(\A) - h^{1,0}(\A) - h^{3,2}(\A)$,
\item $h^{3,3}(\A) = h^{0,0}(\A)$ and $h^{1,1}(\A) = h^{2,2}(\A) = \dfrac{\dim \HH_0(\A) - h^{0,0}(\A) - h^{3,3}(\A)}{2}$.
\item $h^{p,q}(\A) = h^{q,p}(\A)$ for any $p,q \in [0, \ldots, 3]$.
\end{enumerate}

\end{defi}

\begin{theo} 
Let $\A$ be a smooth proper triangulated category which is Calabi-Yau of dimension $3$. Let $\mathrm{rank}_1, \mathrm{rank}_2$ be a non-trivial numerical rank functions on $\A$ and let $\mathfrak{T}_{\A,1}^{\bullet}, \mathfrak{T}_{\A,2}^{\bullet}$ be homological units for $\A$ with respect to $\mathrm{rank}_1$, $\mathrm{rank}_2$. Let $\mathrm{cl} : \A \longrightarrow \mathrm{K}_{num}(\A)$ be the class map and denote by $\A_{unitary}^{(i)}$ the set of objects $\F \in \A$ such that $\HHH^{\bullet}_{\A}(\F,\F) \simeq \mathfrak{T}_{\A,i}^{\bullet}$ as graded rings. Finally, for all $p,q \in [0,\ldots,3]$, we denote by $h^{p,q}_i(\A)$ the Hodge numbers of $\A$ associated to $\mathfrak{T}_{\A,i}^{\bullet}$ as in definition \ref{defihodgenumbers}. We have the following:

\begin{enumerate}
\item  If both $\mathrm{cl}(\A_{unitary}^{(1)})$ and $\mathrm{cl}(\A_{unitary}^{(2)})$ generate $\mathrm{K}_{num}(\A) \otimes \mathbb{C}$, then:

\[ h^{p,q}_1(\A) = h^{p,q}_2(\A),\]
for all $p,q \in [0,\ldots,3]$.

\item If $\mathfrak{T}_{\A,1}^{\bullet} = \mathbb{C} \oplus \mathbb{C}[3]$, $\mathrm{cl}(\A_{unitary}^{(1)})$ generates $\mathrm{K}_{num}(\A) \otimes \mathbb{C}$ and there exists a unitary object in $\A$ with respect to $\mathfrak{T}_{\A,2}^{\bullet}$, then:

\[ h^{p,q}_1(\A) = h^{p,q}_2(\A),\]
for all $p,q \in [0,\ldots,3]$.
\end{enumerate}
\end{theo}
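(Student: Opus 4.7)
The plan is to reduce the theorem to a single numerical equality and then exploit the generation hypotheses to establish it. Since $\HH_\bullet(\A)$ is intrinsic to $\A$, Definition \ref{defihodgenumbers} determines every Hodge number in terms of $\dim \HH_{-i}(\A)$ together with the four numbers $h^{i,0}_j = \dim \mathfrak{T}_{\A,j}^{3-i}$ for $i = 0,1,2,3$. The homological unit $\mathfrak{T}_{\A,j}^\bullet$ itself carries a Calabi-Yau graded algebra structure of dimension $3$, so $\mathfrak{T}_{\A,j}^0 = \mathbb{C}$ (generated by the identity of a unitary object) and by the induced duality on the unit $\dim \mathfrak{T}_{\A,j}^3 = 1$ as well as $\dim \mathfrak{T}_{\A,j}^1 = \dim \mathfrak{T}_{\A,j}^2$. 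Hence $h^{0,0}_j = h^{3,0}_j = 1$ automatically, and the whole theorem collapses to proving the single equality $\dim \mathfrak{T}_{\A,1}^1 = \dim \mathfrak{T}_{\A,2}^1$.

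For part (1), I would compare the two units through the intrinsic Hochschild machinery. Given $\F_2 \in \A_{unitary}^{(2)}$, write $[\F_2] = \sum_\alpha \lambda_\alpha [\F_1^{(\alpha)}]$ in $\mathrm{K}_{num}(\A) \otimes \mathbb{C}$ with $\F_1^{(\alpha)} \in \A_{unitary}^{(1)}$; applying the intrinsic Mukai vector $v : \mathrm{K}_{num}(\A) \otimes \mathbb{C} \to \HH_\bullet(\A)$ gives $v(\F_2) = \sum_\alpha \lambda_\alpha v(\F_1^{(\alpha)})$. The key step is to characterize $\dim \mathfrak{T}_{\A,j}^1$ as the dimension of an intrinsic subspace of $\HH_{-1}(\A)$ (or dually, via the Calabi-Yau pairing, of $\HH_{-2}(\A)^\vee$) canonically associated to the image of $v$ restricted to $\A_{unitary}^{(j)}$. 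Since both generating families exhaust the image of $v$, the two subspaces coincide and the two dimensions agree. For part (2), the hypothesis $\mathfrak{T}_{\A,1}^\bullet = \mathbb{C} \oplus \mathbb{C}[3]$ forces every $\F_1^{(\alpha)}$ to be spherical, so the analogous subspace of $\HH_{-1}(\A)$ is zero; the same argument then forces $\HHH^1(\F_2, \F_2) = 0$ for the given $\F_2 \in \A_{unitary}^{(2)}$, and no generation hypothesis for $\mathfrak{T}_{\A,2}^\bullet$ is needed because only the single equation $\dim \mathfrak{T}_{\A,2}^1 = 0$ must be verified.

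The main obstacle will be passing from the K-theoretic identity $v(\F_2) = \sum_\alpha \lambda_\alpha v(\F_1^{(\alpha)})$ to an equality of individual Ext dimensions, since $\chi(\F,\F) \equiv 0$ on a Calabi-Yau $3$-category and so the naive Euler pairing is useless. One must work with the full graded Mukai pairing on $\HH_\bullet(\A)$ together with the canonical restriction maps $\HH^\bullet(\A) \to \HHH^\bullet(\F, \F) = \mathfrak{T}_{\A,j}^\bullet$, and prove that under the generation hypothesis the images of these restrictions (as $\F$ ranges over unitary objects) assemble into an intrinsic subspace of $\HH^\bullet(\A)$ whose dimension in degree $1$ equals $\dim \mathfrak{T}_{\A,j}^1$. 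Establishing this uniform compatibility, i.e. controlling the common kernel of the restrictions over all unitary $\F$, is where the bulk of the work will lie.
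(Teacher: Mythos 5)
Your opening reduction is the right first move and matches the paper's: the quantities $\dim\HH_{-i}(\A)$ appearing in Definition \ref{defihodgenumbers} are intrinsic to $\A$, so the whole theorem comes down to showing that the two homological units have the same graded dimensions. But the mechanism you propose for that comparison is where the argument breaks down. You want to read off $\dim\mathfrak{T}_{\A,j}^{1}$ from an ``intrinsic subspace of $\HH_{-1}(\A)$ canonically associated to the image of the Mukai vector restricted to $\A_{unitary}^{(j)}$,'' and you concede that constructing this subspace and controlling the common kernel of the restriction maps $\HH^{\bullet}(\A)\to\HHH^{\bullet}(\F,\F)$ is where the bulk of the work lies. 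That key lemma is never established, and it is unclear it can be: the Mukai vector lands in the even part $\HH_{0}(\A)$, and an identity $v(\F_2)=\sum_{\alpha}\lambda_{\alpha}v(\F_1^{(\alpha)})$ carries no information about $\mathrm{Ext}^{1}(\F_2,\F_2)$ --- as you yourself observe, $\chi$ vanishes identically on a Calabi--Yau $3$-category. So the central step of your part (1) is a program, not a proof, and part (2) inherits the same gap.

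The paper's route (Theorem \ref{invaunit} and Corollaries \ref{corrro1}, \ref{corrro2}) uses the generation hypothesis for something much weaker and purely linear-algebraic: $\mathrm{rank}_2$ descends to a nonzero linear functional on $\mathrm{K}_{num}(\A)\otimes\mathbb{C}$, so if $\mathrm{cl}(\A_{unitary}^{(1)})$ spans that space, some $\F\in\A_{unitary}^{(1)}$ has $\mathrm{rank}_2(\F)\neq 0$. The splitting $t_{\F}\circ i_{\F}=\mathrm{rank}_2(\F)\cdot\mathrm{id}$ built into the definition of a homological unit then forces $i_{\F}$ to be injective, giving a graded ring embedding $\mathfrak{T}_{\A,2}^{\bullet}\hookrightarrow\HHH^{\bullet}_{\A}(\F,\F)\simeq\mathfrak{T}_{\A,1}^{\bullet}$; exchanging the roles of the two units yields the isomorphism in case (1), and in case (2) the target is $\mathbb{C}\oplus\mathbb{C}[3]$ while the existence of any unitary object for $\mathfrak{T}_{\A,2}^{\bullet}$ (identity in degree $0$ plus Serre duality in degree $3$) pins down the image. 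No Mukai pairing, no comparison of Ext groups across different objects, and no generation hypothesis for the second unit is needed. A further small point: your claim that $\dim\mathfrak{T}_{\A,j}^{0}=1$ holds automatically is unjustified without the connectivity hypothesis $\HH_{-3}(\A)=\mathbb{C}$, which this theorem does not assume; fortunately it is also unnecessary, since only the equality of the two units matters, not their normalization.
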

The key hypothesis in the above Theorem, namely that the classes of unitary objects generate the complexified numerical $\mathrm{K}$-theory of $\A$ is satisfied for many examples of smooth proper Calabi-Yau categories of dimension $3$. We shall describe such examples in section $2.2$ of the paper. 

\bigskip

\noindent The plan of the paper is the following:
\begin{itemize}
\item In section $2.1$, we recall the definitions of rank functions, homological units and their basic properties. We compute the units for many examples of smooth proper Calabi-Yau category of dimension $3$.
\item In section $2.2$, we study the invariance of the homological unit with respect to the chosen rank function.
\item In section $3.1$, we define the Hodge numbers for a smooth proper Calabi-Yau category endowed with a non-trivial rank function. We compute them for the examples introduced in section $1.1$.
\item In section $3.2$, we assume that our triangulated category is a semi-orthogonal component of the derived category of coherent sheaves on a smooth complex projective variety and that its homological unit (with respect to the rank function coming from the variety) is $\mathbb{C} \oplus \mathbb{C}[3]$. We then define a Hodge structure on the Hochschild homology of this category. The dimensions of the corresponding Hodge spaces are the Hodge numbers aforementioned.
\item In section $3.3$, we give some numerical applications toward the Homological Mirror Symmetry conjecture for cubic sevenfolds and double quartic fivefolds.
\end{itemize} 

\bigskip

\noindent \textbf{Acknowledgment :} I am very grateful to Matt B. Young for stimulating discussions on Mirror Symmetry for cubic sevenfolds and double quartic fivefolds.

\bigskip

\noindent \textbf{Conventions :} We work over the field of complex numbers. We only consider $\mathbb{C}$-linear triangulated categories which can be realized as derived categories of $DG$-modules over a proper $DG$-algebra (call them \textit{proper}). We will mostly consider triangulated categories which can be realized as derived categories of $DG$-modules over a smooth $DG$-algebra (call them \textit{smooth}).

\end{section}

\begin{section}{Homological units and invariance properties for Calabi-Yau categories of dimension three} 
In this section, we recall the definition of homological unit \cite{homounit} and prove some invariance properties for the homological units of Calabi-Yau categories of dimension $3$.

\begin{subsection}{Calabi-Yau categories and homological units}
 We first recall some definitions related to such categories.

\begin{defi}
Let $\A$ be a triangulated category. We say that $\A$ is a \textbf{Calabi-Yau category of dimension $p$} if the shift by $[p]$ is a Serre functor for $\A$. We say furthermore that $\A$ is a \textbf{geometric Calabi-Yau category of dimension $p$} if there exists a smooth projective variety $X$ and a semi-orthogonal decomposition

\[ \DB(X) = \langle \A, \leftexp{\perp}{\A} \rangle, \]
such the shift by $[p]$ is a Serre functor for $\A$.
\end{defi}
This definition already appeared many times in the literature. It has been studied in details when $X$ is a cubic fourfold and $p=2$ in \cite{kuz1} and more generally when $X$ is a hypersurface in (or a double cover of) a rational homogeneous space for any $p$ in \cite{kuz2}. The study of the $K3$ category appearing in a cubic fourfold in connection with rationality problems and the Hodge theory of cubic fourfolds has been carried out in many papers (with starting point \cite{kuz1}). Let us quote \cite{huy1}, which provides detailed study of recent works on the subject.

\bigskip

The notion of homological units has been introduced in \cite{homounit} as a categorical substitute for the algebra $H^{\bullet}(\OO_{\X})$. It is useful when the category under study is not necessarily the derived category of a smooth projective Deligne-Mumford stack. We recall the definition of homological units in the context of triangulated categories.

\begin{defi}
Let $\A$ be a smooth proper triangulated category. A \textbf{rank function} on $\A$ is a function $\mathrm{rk} : \A \longrightarrow \mathbb{Z}$ which is additive with respect to exact triangles and such that $\mathrm{rk}(\F[1]) = - \mathrm{rk}(\F)$, for any $\F \in \A$. We say that the rank function is trivial if it is the zero function and we say that it is a \textbf{numerical rank function} if it descends to a map:
\[ \mathrm{rk} : \mathrm{K}_{num}(\A) \longrightarrow \mathbb{Z},\]
where $\mathrm{K}_{num}(\A)$ is the quotient of the $\mathrm{K}$-theory of $\A$ by the kernel of the bilinear form:
\begin{equation*}
\begin{split} 
\chi : \mathrm{K}_0(\A) \times \mathrm{K}_0(\A) & \longrightarrow \mathbb{Z}\\
(\E, \F) & \longmapsto \sum_{k \geq 0} (-1)^k \dim \mathrm{Ext}^k(\E,\F).
\end{split}
\end{equation*}

\end{defi}
Let $X$ be a smooth projective variety. The Grothendieck-Riemann-Roch Theorem shows that the rank of a numerically trivial object in $\mathrm{K}_{0}(X)$ is necessarily $0$. Hence, the natural rank function on $\DB(X)$ is a numerical rank function. Let $\A$ be a semi-orthogonal component of $\DB(X)$, then we have decompositions:
\[ \mathrm{K}_{0}(X) = \mathrm{K}_{0}(\A) \oplus \mathrm{K}_{0}(\leftexp{\perp}{\A}) \]
and
\[ \mathrm{K}_{num}(X) = \mathrm{K}_{num}(\A) \oplus \mathrm{K}_{num}(\leftexp{\perp}{\A}). \]
As a consequence, the natural rank function on $\DB(X)$ restricts to a numerical rank function on $\A$.

\begin{defi} \label{homounit}
Let $\A$ be a triangulated category endowed with a non-trivial rank function. A graded algebra $\mathfrak{T}_{\A}^{\bullet}$ is called a \textbf{homological unit} for $\A$, if $\mathfrak{T}_{\A}^{\bullet}$ is maximal for the following property. For any object $\F \in \A$, there exists a pair of morphisms $i_{\F} : \mathfrak{T}_{\A}^{\bullet} \longrightarrow  \HHH^{\bullet}_{\A}(\F,\F)$ and $t_{\F} : \HHH^{\bullet}_{\A}(\F, \F) \longrightarrow \mathfrak{T}_{\A}^{\bullet}$ such that:
 \begin{itemize}

 \item the morphism $i_{\F} : \mathfrak{T}_{\A}^{\bullet} \longrightarrow  \HHH^{\bullet}_{\A}(\F,\F)$ is a graded algebra morphism which is functorial in the following sense. Let $\F, \G \in \A$ and let $a \in \mathfrak{T}_{\A}^{k}$ for some $k$. Then, for any morphism $\psi : \F \longrightarrow \G$, there is a commutative diagram:
\begin{equation*}
\xymatrix{ \F \ar[rr]^{i_{\F}} \ar[dd]^{\psi} & &\F [k] \ar[dd]^{\psi[k]} \\
& &  \\
\G \ar[rr]^{i_{\G}} & & \G [k]}
\end{equation*}

\item the morphism $t_{\F} : \HHH^{\bullet}_{\A}(\F, \F) \longrightarrow \mathfrak{T}_{\A}^{\bullet}$ is a graded vector spaces morphism such that for any $\F \in \A$ and any $a \in \mathfrak{T}_{\A}^{\bullet}$, we have $t_{\F}(i_{\F}(a)) = \mathrm{rank}(\F).a$.

\end{itemize}

With hypotheses as above, an object $\F \in \A$ is said to be \textup{unitary}, if $\HHH^{\bullet}_{\A}(\F,\F) \simeq \mathfrak{T}_{\A}^{\bullet}$ as graded rings, where $\mathfrak{T}_{\A}^{\bullet}$ is a homological unit for $\A$.

\end{defi}

\begin{rem} \label{rem1}
\upshape{
\begin{enumerate}
\item In \cite{homounit}, the notion of rank function was only defined on <<nice>> abelian categories (and we would descend them on the derived categories of such abelian categories). It seems that there is no harm in defining them directly at the level of triangulated categories. This is the approach we have chosen for the present paper.

\item Let $\X$ be a projective Deligne-Mumford stack which can be written as a global quotient $[X/ \GG]$ where $X$ is a projective variety and $\GG$ is a reductive group acting linearly on $X$. Let $\OO_{X}(1)$ be a $\GG$-equivariant line bundle. A minor modification of the arguments in Theorem 4 of \cite{orlov} shows that there is an equivalence:

\[ D^{perf}(\X) \simeq D^{perf}(\C), \]
where

\[\C = \mathrm{RHom}^{\GG}_X \left(\bigoplus_{i=0}^{\dim X} \OO_{X}(i), \bigoplus_{i=0}^{\dim X} \OO_{X}(i) \right).\]

Let us consider the rank of an $\OO_{\X}$-module as a rank function on $D^{perf}(\X)$. In such a case, we have $\mathfrak{T}_{D^{perf}(\X)}^{\bullet} = H^{\bullet}(\OO_{\X})$. Furthermore, for any $\F \in \D^{perf}(\X)$, the morphism $i_{\F}$ is the tensor product (over $\OO_{\X}$) with the identity map of $\F$ and the morphism $t_{\F}$ is the trace map $\mathrm{Hom}_{\X}^{\bullet}(\F,\F) \longrightarrow H^{\bullet}(\OO_{\X})$. 

\item In the above definition, the existence of the morphisms $i_{\F}$ and $t_{\F}$ for all $\F \in \A$ and their functorial properties is equivalent to the existence of a morphism of graded algebras:

\[ \mathfrak{T}_{\A}^{\bullet} \longrightarrow \HH^{\bullet}(\C), \]
where $\HH^{\bullet}(\A)$ is the Hochschild cohomology of $\A$. As the rank function on $\A$ is non-trivial, then the map $\mathfrak{T}_{_A}^{\bullet} \longrightarrow \HH^{\bullet}(\A)$ is injective.

 \item On the other hand, the definition and the (splitting) properties of the morphisms $t_{\F}$, for $\F \in \A$ with non-zero rank do not seem to be easily written using only the notion of graded morphisms between $\HH^{\bullet}(\A)$ and $\mathfrak{T}_{\A}^{\bullet}$. It is appears that there is no obvious way to write that $t_{\F}$ splits $i_{\F}$ whenever the rank of $\F$ is not zero only in terms of Hochschild cohomology.

\item If $\A$ contains a unitary object whose rank is not zero, then the homological unit of $\A$ is necessarily unique (though the embedding of the homological unit in $\HH^{\bullet}(\A)$ is certainly not unique). This follows from the maximality condition imposed in definition \ref{homounit}.

\item Let $X$ and $Y$ be smooth projective varieties of dimension less or equal to $4$ such that $\DB(X) \simeq \DB(Y)$. It is proved in \cite{homounit} that the algebras $H^{\bullet}(\OO_X)$ and $H^{\bullet}(\OO_Y)$ are isomorphic. This suggests that the homological unit of a DG category of geometric origin could be independent of the embedding into the derived category of a smooth projective Deligne-Mumford stack (at least if the dimensions of the varieties are small enough). In the next subsection, we will investigate in more details the invariance properties of homological units attached to geometric Calabi-Yau categories of dimension $3$.
\end{enumerate}
}
\end{rem}

Let $X \subset \mathbb{P}^5$ be a smooth cubic hypersurface. According to \cite{kuz1}, we have a semi-orthogonal decomposition:

\[\DB(X) = \langle \A_{X}, \OO_{X}, \OO_{X}(1), \OO_{X}(2) \rangle, \]
where the Serre functor of $\A_X$ is the twist by $[2]$. This category has been studied in some details, most prominently in connection with the rationality problem for cubic fourfolds (see \cite{kuz1, richard, huy1} for instance). 

\begin{prop} \label{prophodge} 
We keep notations as above. Then:
\begin{enumerate}
\item The homological unit of $\A_X$ with respect to the rank function coming from $\DB(X)$ is $\mathbb{C} \oplus \mathbb{C}[2]$.
\item The homological unit of $\A_X$ allows to define a weight-2 Hodge structure on $\mathrm{HH}_{\bullet}(\A_X)$.
\end{enumerate}
\end{prop}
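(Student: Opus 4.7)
The plan is to handle the two items of the proposition separately, leveraging the $2$-Calabi-Yau structure on $\A_X$ and the known Hochschild cohomology of the K3 category attached to a cubic fourfold.

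For (1), I would first exhibit an embedding $\mathbb{C} \oplus \mathbb{C}[2] \hookrightarrow \HH^{\bullet}(\A_X)$ together with the required family $(i_{\F}, t_{\F})$, and then prove maximality. In degree zero, $i_{\F}$ is the identity and $t_{\F}$ the ordinary Euler trace, so there is nothing to do. In degree two, the Serre functor being $[2]$ gives, for each $\F \in \A_X$, a perfect Serre pairing $\mathrm{Hom}(\F,\F) \otimes \mathrm{Ext}^2(\F,\F) \to \mathbb{C}$. I take the dual of $\mathrm{id}_{\F}$ under this pairing as the image of the generator of $\mathbb{C}[2]$ under $i_{\F}$, and the Serre trace as the degree-two component of $t_{\F}$. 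The naturality of Serre duality packages the $i_{\F}$'s into a natural transformation $\mathrm{id}_{\A_X} \Rightarrow [2]$, i.e.\ into a class $\sigma \in \HH^2(\A_X)$. The relation $t_{\F} \circ i_{\F} = \mathrm{rank}(\F)\cdot\mathrm{id}$ reduces to $\mathrm{tr}(\mathrm{id}_{\F}) = \mathrm{rank}(\F)$ in each degree, and the graded-algebra axiom is automatic because the square of the degree-two generator would sit in $\mathrm{Ext}^4(\F,\F)$, which vanishes since the Serre functor is $[2]$.

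The real content is maximality, and this is the main obstacle. I would appeal to the computation of $\HH^{\bullet}(\A_X)$ due to Kuznetsov, which matches that of a K3 surface: $\mathbb{C}, 0, \mathbb{C}^{22}, 0, \mathbb{C}$ in degrees $0,\ldots,4$. Odd degrees vanish, so it suffices to rule out any enlargement of the unit in degrees $2$ or $4$. For degree $2$, any class $\alpha \in \HH^2(\A_X)$ lying in the unit would, for every $\F$, yield $i_{\F}(\alpha) \in \mathrm{Ext}^2(\F,\F)$ with Serre trace equal to $\mathrm{rank}(\F)\cdot a$; evaluating on a generating family of objects of non-zero rank pins $\alpha$ down to a scalar multiple of $\sigma$. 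For degree $4$, the product of any such element with $\sigma$ would land in $\mathrm{Ext}^6 = 0$, while the trace-splitting relation would force it to act non-trivially on some unitary object, a contradiction. The delicate step is rigidifying an element of $\HH^2(\A_X)$ via the trace axiom rather than merely by counting dimensions.

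For (2), with $\mathfrak{T}_{\A_X}^{\bullet} = \mathbb{C} \oplus \mathbb{C}[2]$ in hand, the same bookkeeping as in definition \ref{defihodgenumbers}, specialized to a $2$-Calabi-Yau (i.e.\ dropping one weight), yields $h^{0,0}(\A_X) = h^{2,0}(\A_X) = h^{0,2}(\A_X) = h^{2,2}(\A_X) = 1$ and $h^{1,1}(\A_X) = 20$, matching the K3 table. To promote these numbers to a genuine weight-$2$ Hodge structure on $\bigoplus_n \HH_n(\A_X)$, I would identify $\HH_{\bullet}(\A_X)$ with an appropriate Tate twist of the primitive middle cohomology of $X$ carrying the transcendental K3-type Hodge structure, using the work of Addington--Thomas and Kuznetsov. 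The final point to verify is that the filtration produced intrinsically by the homological unit coincides with the classical Hodge filtration on $H^4(X,\mathbb{C})$, which I expect to reduce to the compatibility of the HKR isomorphism with the semi-orthogonal decomposition and Serre duality.
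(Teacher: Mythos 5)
Your construction of the pair $(i_{\F},t_{\F})$ in degrees $0$ and $2$ via Serre duality is the same as the paper's, and that part is fine. The genuine gap is in the maximality step of item (1). Knowing $\HH^{\bullet}(\A_X)=(\mathbb{C},0,\mathbb{C}^{22},0,\mathbb{C})$ only gives $\mathfrak{T}^{1}_{\A_X}=\mathfrak{T}^{3}_{\A_X}=0$; it leaves open that $\mathfrak{T}^{2}_{\A_X}$ has dimension up to $22$ and that $\mathfrak{T}^{4}_{\A_X}=\mathbb{C}$. The only leverage the definition of homological unit gives you is that $i_{\F}$ is \emph{injective} on every object $\F$ of non-zero rank (because $t_{\F}$ splits it), so the upper bound on $\mathfrak{T}^{k}_{\A_X}$ is $\min\dim\mathrm{Ext}^{k}(\F,\F)$ over such objects. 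Your proposed substitute --- ``evaluating on a generating family of objects of non-zero rank pins $\alpha$ down to a multiple of $\sigma$'' --- does not follow from the axioms: the relation $t_{\F}\circ i_{\F}=\mathrm{rank}(\F)\cdot\mathrm{id}$ constrains $t_{\F}$, not the size of $\mathfrak{T}^{2}_{\A_X}$. Likewise ``$\mathrm{Ext}^{4}(\F,\F)=0$ since the Serre functor is $[2]$'' is false for a general object of $\A_X$ (take $\F\oplus\F[2]$), consistent with $\HH^{4}(\A_X)\neq 0$. What is actually needed, and what the paper supplies, is an explicit object of $\A_X$ of non-zero rank with small self-Ext algebra: the syzygy bundle $\F_l=\ker\bigl(H^0(X,\mathcal{J}_l(1))\otimes\OO_X\to\mathcal{J}_l(1)\bigr)$ of a line $l\subset X$, which lies in $\A_X$, has rank $3$, is stable (so $\mathrm{Hom}(\F_l,\F_l)=\mathbb{C}$, hence $\mathrm{Ext}^{2}(\F_l,\F_l)=\mathbb{C}$ by Serre duality, and $\mathrm{Ext}^{\geq 3}(\F_l,\F_l)=0$ since it is a sheaf). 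The injection $\mathfrak{T}^{\bullet}_{\A_X}\hookrightarrow\mathrm{Ext}^{\bullet}(\F_l,\F_l)$ then gives the required upper bound. Without producing such an object (it is not even a priori clear that $\A_X$ contains any object of non-zero rank), the maximality claim is unproved.

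For item (2) you take a genuinely different route, importing the weight-$2$ structure from the primitive cohomology of $X$ via Addington--Thomas/Kuznetsov. That would produce a Hodge structure, but it is precisely the ``ad hoc'' identification the paper is trying to avoid: the paper instead transports complex conjugation through the HKR isomorphism to get an involution $\cc_{\A_X}:\HH_{\bullet}(\A_X)\simeq\HH_{-\bullet}(\A_X)$, and sets $H^{2,0}:=\mathfrak{T}^{0}_{\A_X}\subset\HH_{-2}(\A_X)$, $H^{0,2}:=\cc_{\A_X}(\mathfrak{T}^{0}_{\A_X})$, $H^{1,1}:=\HH_{0}(\A_X)$, so that the unit itself singles out the $(2,0)$-part. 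Note also a numerical slip: with the paper's conventions $h^{1,1}(\A_X)=\dim\HH_{0}(\A_X)=22$ (the Mukai-lattice count), not $20$.
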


\begin{proof}
We first prove that the homological unit of $\A_X$ with respect to the rank function coming from $\DB(X)$ is $\mathbb{C} \oplus \mathbb{C}[2]$. The twisted version of the Hochschild-Kostant-Rosenberg provides (see \cite{hochschild} for instance) isomorphisms:

\begin{equation*}
\begin{split}
\tau_{\mathrm{HH}^{\bullet}} : &\ \ \mathrm{HH}^{\bullet}(\DB(X)) \simeq \bigoplus_{p+q = \bullet}H^{q}(X, \bigwedge^p T_{X}) \\
\tau_{\mathrm{HH}_{\bullet}} : &\ \ \mathrm{HH}_{\bullet}(\DB(X)) \simeq \bigoplus_{p-q = \bullet}H^{q}(X, \Omega^p_X), \\
\end{split}
\end{equation*}
where $\tau_{\mathrm{HH}^{\bullet}}$ is compatible with ring structures on both sides and $\tau_{\mathrm{HH}_{\bullet}}$ is compatible with the module structures over $\mathrm{HH}^{\bullet}(\DB(X)) \simeq \bigoplus_{p+q = \bullet}(H^{q}(X, \bigwedge^p T_{X})$ on both sides (the module structure on the left is given by the composition of morphisms while the module structure on the right is given by contraction of forms). Since the Hodge diamond of a smooth cubic fourfold is:
\begin{equation*}
\begin{tabular}{ccccccccc} 
& & & & 1 & & & & \\
& & & 0& &0 & & &  \\
& & 0& &1 & &0 & & \\
&0& &0 & &0 & &0& \\
0& &1 & &21 & &1 & &0\\
&0& &0 & &0 & &0& \\
& & 0& &1 & &0 & & \\
& & & 0& &0 & & &  \\
& & & & 1 & & & & \\
\end{tabular}
\end{equation*}
we deduce that $\HH_{-4}(\DB(X)) = \HH_{4}(\DB(X)) = \HH_{-3}(\DB(X)) = \HH_{3}(\DB(X)) = \HH_{-1}(\DB(X)) = \HH_{1}(\DB(X)) =0$, $\HH_{-2}(\DB(X)) \simeq \HH_{2}(\DB(X)) \simeq \mathbb{C}$ and $\HH_{0}(\DB(X)) \simeq \mathbb{C}^{25}$. On the other hand, according to \cite{kuz3}, we have a splitting:

\[ \HH_{\bullet}(\DB(X)) = \HH_{\bullet}(\A_X) \oplus \HH_{\bullet}(\langle \OO_X \rangle) \oplus  \HH_{\bullet}(\langle \OO_X(1) \rangle)   \oplus \HH_{\bullet}(\langle \OO_X(2) \rangle). \]
Hence, we obtain the following isomorphisms:
\begin{equation*}
\begin{split}
& \HH_{-4}(\A_X) = \HH_{4}(\A_X) = \HH_{-3}(\A_X) = \HH_{3}(\A_X) = \HH_{-1}(\A_X) = \HH_{1}(\A_X) =0\\
& \HH_{-2}(\A_X) \simeq \HH_{2}(\A_X) \simeq \mathbb{C}\\
& \HH_{0}(\A_X) \simeq \mathbb{C}^{22}.
\end{split}
\end{equation*}
 Since the shift by $[2]$ is a Serre functor for $\A_X$, we get the isomorphism of graded vector spaces:
 \[ \HH_{\bullet}(\A_X) \simeq \HH^{\bullet - 2}(\A_X).\] 
 We can therefore compute the dimension of the Hochschild cohomology spaces of $\A_X$:

\begin{equation*}
\begin{split}
& \HH^1(\A_X) = \HH^3(\A_X) =0\\
& \HH^0(\A_X) \simeq \HH^{4}(\A_X) \simeq \mathbb{C}\\
& \HH^{2}(\A_X) \simeq \mathbb{C}^{22}.
\end{split}
\end{equation*}

\bigskip

Let $l \subset X$ be a line in $X$ and let $\mathcal{J}_l \subset \OO_X$ be the ideal sheaf of $l$. It is easily proved that $H^0(X, \mathcal{J}_l(1)) = \mathbb{C}^4$ and that $\mathcal{J}_l(1)$ is generated by global sections. As a consequence, we have an exact sequence:

\[ 0 \longrightarrow \mathcal{F}_l \longrightarrow H^0(X, \mathcal{J}_l(1)) \otimes \OO_X \longrightarrow \mathcal{J}_l(1) \longrightarrow 0, \]
where $\mathcal{F}_l$ is the kernel of the evaluation map $H^0(X, \mathcal{J}_l(1)) \otimes \OO_X \longrightarrow \mathcal{J}_l(1)$. The bundle $\mathcal{F}_l$ is usually called the \textit{syzygy bundle associated to $l$}. It is easily proved that:

\[H^{\bullet}(X, \mathcal{F}_l) = H^{\bullet}(X, \mathcal{F}_l(-1)) = H^{\bullet}(X, \mathcal{F}_l(-2)) = 0, \]
so that $\mathcal{F}_l \in \A_X$. This bundle appeared already many times in the literature. It was used for instance in \cite{kuzmarku} to prove that the variety of lines in $X$ is a connected component of the moduli space of sheaves in $\A_X$ having the same Chern classes as $\mathcal{F}_l$. The sheaf $\F_l$ has rank $3$, is reflexive and we have $c_1(\F_l) = \OO_X(-1)$. Since $H^0(X, \F_l) = 0$ and $H^0(X, \F_l^{\vee}(-1) = H^0(X, \F_l(-2)) = 0$, we deduce that $\F_l$ is a stable sheaf with respect to $\OO_X(1)$. In particular, we have:

\begin{equation*}
\mathrm{Hom}(\F_l, \F_l) = \mathbb{C}.
\end{equation*}
Since $\F_l \in \A_X$ and $\A_X$ is Calabi-Yau category of dimension $2$, we find $\mathrm{Ext}^2(\F_l \F_l) = \mathbb{C}$ and $\mathrm{Ext}^{p}(\F_l, \F_l) = 0$ for $p \geq 3$. Let $\mathfrak{T}^{\bullet}_{\A_X}$ be a homological unit for $\A_X$ with respect to the embedding in $\DB(X)$. As $\mathrm{rank}(\F_l) = 3$ and $\F_l \in \A_X$, we have embedding of graded algebras:

\[ \mathfrak{T}^{\bullet}_{\A_X} \hooklongrightarrow \mathrm{Ext}^{\bullet}(\F_l, \F_l),\]
by definition of an homological unit for $\A_X$. We therefore find $\mathfrak{T}^0_{\A_X} \simeq \mathfrak{T}^2_{\A_X} \simeq \mathbb{C}$ and $\mathfrak{T}^p_{\A_X} = 0$ for $p \geq 3$ or $p <0$. By item $2$ of remark \ref{rem1} above, we have an embedding:

\begin{equation*}
\mathfrak{T}^{\bullet}_{\A_X} \hooklongrightarrow \HH^{\bullet}(\A_X).
\end{equation*}
We have already observed that $\HH^{1}(\A_X) = 0$, which implies $\mathfrak{T}^{1}_{\A_X} = 0$. We conclude that if $\mathfrak{T}^{\bullet}(\A_X)$ is a homological unit for $\A_X$ with respect to the rank function coming from $\DB(X)$, then we necessarily have $\mathfrak{T}^{\bullet}_{\A_X} = \mathbb{C} \oplus \mathbb{C}[2]$. We now prove that $\mathbb{C} \oplus \mathbb{C}[2]$ is indeed a homological unit for $\A_X$ with respect to the rank function coming from $\DB(X)$. For any $\E \in \A_X$, any $a \in \mathbb{C}$ and any $f \in \HHH(\E,\E)$, we put:

\[ i^{0}_{\E}(a) = a.\mathrm{id}_{\E} \ \ \textrm{and} \ \ t^{0}_{E}(f) = \mathrm{Trace}(f),\]
where $\mathrm{Trace}$ is the trace map inherited from $\DB(X)$. It is clear that $t^0_{\E}(i^0_{\E}(a)) = \mathrm{rank}(\E).a$, for any $a \in \mathbb{C}$. Furthermore, as $\A_X$ is Calabi-Yau category of dimension $2$, we have functorial isomorphisms:

\[ \mathrm{Ext}^{2}(\E,\E) \simeq \HHH(\E,\E)^*, \]
for all $\E \in \A_X$. As a consequence, the pair of morphisms:

\[ i^{2}_{\E} = \left(t^{0}_{\E} \right)^* : \mathbb{C} \longrightarrow \mathrm{Ext}^{2}(\E,\E), \ \ t^{2}_{\E} = \left(i_{\E}^{2} \right)^{*} : \mathrm{Ext}^{2}(\E,\E) \longrightarrow \mathbb{C} \]
is well-defined and satisfies $t_{\E}^{2} \circ i_{\E}^{2} = \left(t_{E}^{0} \circ i_{E}^{0}\right)^{*} = \mathrm{rank}(E).\mathrm{id}_{\mathbb{C}}$. The algebra structure on $\mathbb{C} \oplus \mathbb{C}[2]$ is trivial. Hence, the functoriality of the isomorphisms $\HHH(E,E)^* \simeq \mathrm{Ext}^{2}(\E,\E)$, the functoriality of the morphisms $i_{\E}$ and the properties of the trace map imply that $\mathbb{C} \oplus \mathbb{C}[2]$ is indeed a homological unit for $\A_X$.

\bigskip

We now the second item of proposition \ref{prophodge}. By \cite{kuz3}, we have a graded decomposition:
\begin{equation*}
\HH_{\bullet}(\DB(X)) = \HH_{\bullet}(\A_X) \oplus \HH_{\bullet}(\langle \OO_X \rangle) \oplus \HH_{\bullet}(\langle \OO_X(1) \rangle) \oplus \HH_{\bullet}(\langle \OO_X(2) \rangle).
\end{equation*}
the Hochschild-Kostant-Rosenberg isomorphism, we have an isomorphism:
\begin{equation*}
\begin{split}
\tau_{\mathrm{HH}_{\bullet}} : &\ \ \mathrm{HH}_{\bullet}(\DB(X)) \simeq \bigoplus_{p-q = \bullet}H^{q}(X, \Omega^p_X), \\
\end{split}
\end{equation*}
Furthermore, by Hodge symmetry, the complex conjugation induces an isomorphism:
\begin{equation*}
\mathrm{HS}_{X} : H^p(X,\Omega^q_X) \simeq H^{q}(X, \Omega^p_X).
\end{equation*}
Composing complex conjugation with the inverse of the map $\tau_{\mathrm{HH}_{\bullet}}$, we find an involution:
\begin{equation*}
\mathrm{c}_X : \mathrm{HH}_{\bullet}(\DB(X)) \simeq \mathrm{HH}_{-\bullet}(\DB(X))
\end{equation*}
It is easily checked that the map $\mathrm{c}_X$ stabilizes $\HH_{\bullet}(\langle \OO_X \rangle) \oplus \HH_{\bullet}(\langle \OO_X(1) \rangle) \oplus \HH_{\bullet}(\langle \OO_X(2) \rangle)$ in the decomposition:
\begin{equation*}
\HH_{\bullet}(\DB(X)) = \HH_{\bullet}(\A_X) \oplus \HH_{\bullet}(\langle \OO_X \rangle) \oplus \HH_{\bullet}(\langle \OO_X(1) \rangle) \oplus \HH_{\bullet}(\langle \OO_X(2) \rangle).
\end{equation*}
In particular, since we have an equality:
\begin{equation*}
\HH_{\bullet}(\A_X) = \HH_{\bullet}(X)/ \left( \HH_{\bullet}(\langle \OO_X \rangle) \oplus \HH_{\bullet}(\langle \OO_X(1) \rangle) \oplus \HH_{\bullet}(\langle \OO_X(2) \rangle) \right)
\end{equation*}
the map $\mathrm{c}_X$ descends to an involution:
\begin{equation*}
\cc_{\A_X} : \HH_{\bullet}(\A_X) \simeq \HH_{- \bullet}(\A_X).
\end{equation*}
Notice that $\cc_{\A_X}$ fixes $\HH_{0}(\A_X)$ as Hodge symmetry fixes $\bigoplus_{p} H^{p,p}(X)$. We have $\mathfrak{T}_{\A_X}^0 \hooklongrightarrow \HH_{-2}(\A_X)$, so that $\cc_{\A_X}(\mathfrak{T}_{\A_X}^0) \hooklongrightarrow \HH_{2}(\A_X)$. Hence we can define:

\begin{equation*}
\begin{split}
& \HH_{\bullet}^{2,0}(\A_X) :=  \mathfrak{T}_{\A_X}^0 \\
& \HH_{\bullet}^{0,2}(\A_X) :=  \cc_{\A_X}(\mathfrak{T}_{\A_X}^0) \\
& \HH_{\bullet}^{1,1}(\A_X) := \HH_{0}(\A_X).\\
\end{split}
\end{equation*}
Since \[ \HH_{-4}(\A_X) = \HH_{4}(\A_X) = \HH_{-3}(\A_X) = \HH_{3}(\A_X) = \HH_{-1}(\A_X) = \HH_{1}(\A_X) =0,\] and
\[ \HH_{2}(\A_X) \simeq \HH_{-2}(\A_X) \simeq \mathbb{C} \]
while $\cc_{\A_X}$ fixes $\HH_{0}(\A_X)$ and $\cc_{\A_X}\left(\HH_{\bullet}^{2,0}(\A_X) \right) = \HH_{\bullet}^{0,2}(\A_X)$, we deduce that there is a direct sum decomposition:
\[ \HH_{\bullet}(\A_X) = \HH_{\bullet}^{2,0}(\A_X) \oplus \HH_{\bullet}^{1,1}(\A_X) \oplus  \HH_{\bullet}^{0,2}(\A_X) \]
which is a weight-2 pure Hodge structure on $\HH_{\bullet}(\A_X)$.

\end{proof}

\begin{rem}
\begin{enumerate}
\item The homological unit of $\A_X$ (with respect to the rank function inherited from $X$) being $\mathbb{C} \oplus \mathbb{C}[2]$ is in sharp contrast with the fact that $H^{\bullet}(\OO_{X}) = \mathbb{C}$. Nevertheless, it matches with the general principle that $\A_X$ should be seen as a <<non-commutative>> $K3$ surface.

\item The topological $K$-theory of $\A_X$  as well as a weight-2 Hodge structure on its complexification have been defined and studied in \cite{richard}. Their definition of the weight-2 structure is however slightly ad-hoc as they rely on the fact that the Hodge diamond of a cubic fourfold is almost equal, in an obvious way, to that of a $K3$ surface. 

\item It is easily checked that weight-$2$ Hodge structure we define on $\A_X$ coincide with that of \cite{richard}. Nevertheless, our definition of the weight structure seems to be more natural as it adapts to many geometric Calabi-Yau categories of dimension less or equal to $3$. We will see in the second section of this paper how the second item of proposition \ref{prophodge} can be generalized for such categories.

\item One can also define a sub-lattice of $\HH_{\bullet}(\A_X)$ as the projection on $\HH_{\bullet}(\A_X)$ of the sub-lattice of $\HH_{\bullet}(\DB(X))$ given by $\tau_{\HH_{\bullet}}^{-1}(\left( \bigoplus_{p-q = \bullet} H^q(X,\Omega^p_X) \cap H^{p+q}(X,\mathbb{Z}) \right)$. The complexification of this lattice in $\HH_{\bullet}(\A_X)$ is equal to $\HH_{\bullet}(\A_X)$.
\end{enumerate}
\end{rem}

\bigskip

In case $\A$ is an admissible Calabi-Yau subcategory of the derived category of a projective Deligne-Mumford stack which contains a spherical object whose rank is non-zero, the homological unit is easily computed:

\begin{prop} \label{prop1}
Let $\X$  be projective Deligne-Mumford stack and $\A \subset D^{perf}(\X)$ be an admissible subcategory. Assume that $\A$ is a Calabi-Yau category of dimension $p$ and that it contains a $p$-spherical object whose rank (as a $\OO_{\X}$-module) is non-zero. Then, the homological unit of $\A$ (with respect to the rank function coming from $D^{perf}(\X)$) is $\mathbb{C} \oplus \mathbb{C}[p]$.
\end{prop}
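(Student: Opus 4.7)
The plan is to combine two observations. First, the presence of a $p$-spherical object of non-zero rank forces any candidate homological unit to embed as a graded subalgebra of $\mathbb{C} \oplus \mathbb{C}[p]$. Second, $\mathbb{C} \oplus \mathbb{C}[p]$ itself satisfies the defining property of Definition \ref{homounit}, so by the maximality clause the homological unit must equal $\mathbb{C} \oplus \mathbb{C}[p]$.

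For the first step I would let $E \in \A$ be the $p$-spherical object with $\mathrm{rank}(E) \neq 0$. Since $\A$ is Calabi-Yau of dimension $p$, sphericity yields $\HHH_{\A}^{\bullet}(E,E) \simeq \mathbb{C} \oplus \mathbb{C}[p]$ as a graded algebra (the product is trivial in positive degree because $\mathrm{Ext}^{2p}(E,E)=0$). Given any graded algebra $\mathfrak{T}_{\A}^{\bullet}$ satisfying the property in Definition \ref{homounit}, the morphisms associated to $E$ satisfy $t_{E} \circ i_{E} = \mathrm{rank}(E)\cdot \mathrm{id}$, which is an isomorphism since $\mathrm{rank}(E) \neq 0$. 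Hence $i_{E}$ is an injective graded algebra morphism, so $\mathfrak{T}_{\A}^{\bullet}$ is a graded unital subalgebra of $\mathbb{C} \oplus \mathbb{C}[p]$.

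For the second step I would mimic the construction carried out in the proof of Proposition \ref{prophodge}, now for arbitrary $p$. For each $\F \in \A$, define $i_{\F}^{0}(a) = a \cdot \mathrm{id}_{\F}$ and $t_{\F}^{0} = \mathrm{Trace}$, the trace inherited from $D^{perf}(\X)$. In degree $p$, the Calabi-Yau hypothesis provides a functorial Serre duality isomorphism $\mathrm{Ext}^{p}(\F,\F) \simeq \HHH(\F,\F)^{*}$, enabling the definitions $i_{\F}^{p} = (t_{\F}^{0})^{*}$ and $t_{\F}^{p} = (i_{\F}^{0})^{*}$. The splitting identity $t_{\F} \circ i_{\F} = \mathrm{rank}(\F)\cdot \mathrm{id}$ holds in degree $0$ by the standard property of the trace and carries over to degree $p$ by dualization; the graded algebra morphism property of $i_{\F}$ is automatic because the multiplication on $\mathbb{C} \oplus \mathbb{C}[p]$ is trivial in positive degree.

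The main point to check is the functoriality clause, namely the commutativity of the diagram for an arbitrary morphism $\psi : \F \longrightarrow \G$. In degree $0$ this is immediate from the definition of $i_{\F}^{0}$; in degree $p$ it reduces to the naturality of Serre duality combined with the compatibility of $\mathrm{Trace}$ with composition, both of which descend from $D^{perf}(\X)$. Once this is established, the two steps combine via the maximality clause of Definition \ref{homounit}, and uniqueness can additionally be invoked from item $5$ of Remark \ref{rem1}, since $E$ is itself a unitary object of non-zero rank. We conclude $\mathfrak{T}_{\A}^{\bullet} = \mathbb{C} \oplus \mathbb{C}[p]$.
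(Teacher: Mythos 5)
Your proposal is correct and follows essentially the same route as the paper: first embed any candidate unit into $\mathrm{Ext}^{\bullet}(\E,\E) = \mathbb{C} \oplus \mathbb{C}[p]$ via the non-zero-rank spherical object, then verify that $\mathbb{C} \oplus \mathbb{C}[p]$ itself is a homological unit by taking $i^{0}_{\F}(a) = a\cdot\mathrm{id}_{\F}$, $t^{0}_{\F} = \mathrm{Trace}$, and dualizing through the Serre duality isomorphism $\mathrm{Ext}^{p}(\F,\F) \simeq \HHH(\F,\F)^{*}$ to produce the degree-$p$ maps. Your explicit remark that $t_{\E} \circ i_{\E} = \mathrm{rank}(\E)\cdot\mathrm{id}$ forces injectivity of $i_{\E}$ is a slightly more careful justification of the embedding than the paper gives, but the argument is the same.
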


We recall that an object $\E \in \A$ is $p$-spherical if:
\[ \mathrm{Ext}^{\bullet}(\E, \E) = \mathbb{C} \oplus \mathbb{C}[p].\]

\begin{proof}
Let $\mathfrak{T}_{\A}^{\bullet}$ be a homological unit for $\A$ with respect to the rank function coming from $D^{perf}(\X)$. Let $\E$ be a $p$-spherical object in $\A$ which rank is not zero. By definition of homological unit, we must have $\mathfrak{T}_{\A}^{\bullet} \hooklongrightarrow \mathrm{Ext}^{\bullet}(\E,\E) = \mathbb{C} \oplus \mathbb{C}[p]$.

\bigskip

We now prove that $\mathbb{C} \oplus \mathbb{C}[p]$ is indeed a homological unit for $\A$ with respect to the rank function coming from $D^{perf}(\X)$. For any $\E \in \A$, any $a \in \mathbb{C}$ and any $f \in \HHH(\E,\E)$, we put:

\[ i^{0}_{\E}(a) = a.\mathrm{id}_{\E} \ \ \textrm{and} \ \ t^{0}_{E}(f) = \mathrm{Trace}(f),\]
where $\mathrm{Trace}$ is the trace map inherited from $\DB(\X)$. It is clear that $t^0_{\E}(i^0_{\E}(a)) = \mathrm{rank}(\E).a$, for any $a \in \mathbb{C}$. Furthermore, as $\A_X$ is Calabi-Yau category of dimension $p$, we have functorial isomorphisms:

\[ \mathrm{Ext}^{p}(\E,\E) \simeq \HHH(\E,\E)^*, \]
for all $\E \in \A$. As a consequence, the pair of morphisms:

\[ i^{p}_{\E} = \left(t^{0}_{\E} \right)^* : \mathbb{C} \longrightarrow \mathrm{Ext}^{p}(\E,\E), \ \ t^{p}_{\E} = \left(i_{\E}^{0} \right)^{*} : \mathrm{Ext}^{p}(\E,\E) \longrightarrow \mathbb{C} \]
is well-defined and satisfies $t_{\E}^{p} \circ i_{\E}^{p} = \left(t_{E}^{0} \circ i_{E}^{0}\right)^{*} = \mathrm{rank}(E).\mathrm{id}_{\mathbb{C}}$. The algebra structure on $\mathbb{C} \oplus \mathbb{C}[p]$ is trivial. Hence, the functoriality of the isomorphisms $\HHH(E,E)^* \simeq \mathrm{Ext}^{p}(\E,\E)$, the functoriality of the morphisms $i_{\E}$ and the properties of the trace map imply that $\mathbb{C} \oplus \mathbb{C}[p]$ is indeed a homological unit for $\A$.

\end{proof}

\begin{exem} \label{exem1}
\begin{enumerate}
\upshape{
\item Let $X \subset \mathbb{P}^{8}$ be a generic cubic hypersurface. It was checked in \cite{maniliev} that $X$ is a linear section of the $\mathrm{E}_6$ invariant cubic $\mathcal{C}_{\mathrm{E}_6} \subset \mathbb{P}(V_{27})$, where $V_{27}$ is the minuscule $27$-dimensional representation of $\mathrm{E}_6$. Denote by $L_X \subset V_{27}$ the $9$-dimensional vector space such that $\mathcal{C} \cap \mathbb{P}(L_X) = X$. As explained in \cite{kuz2}, we have a semi-orthogonal decomposition:
 
 \[ \DB(X) = \langle \A_X, \OO_{X}, \OO_{X}(1), \ldots, \OO_{X}(5) \rangle, \]
where $\A_X$ is a $3$-dimensional Calabi-Yau category. We will prove below that the homological unit of $\A_X$ (with respect to the rank function coming from $X$) is $\mathbb{C} \oplus \mathbb{C}[3]$ by exhibiting a $3$-spherical vector bundle in $\A_X$.

Recall that a Jordan algebra structure can be put on $V_{27}$. Namely let :
\begin{equation*}
 V_{27} = 
 \left\{ 
 \begin{pmatrix}
 \lambda_1 & \sigma_1 & \overline{\sigma_2} \\
 \overline{\sigma_1} & \lambda_2 & \sigma_3 \\
 \sigma_2 & \overline{\sigma_3} & \lambda_3\\
 \end{pmatrix},
  \ \sigma_1,\sigma_2,\sigma_3 \in \mathbb{O} \ \textrm{and} \ \lambda_1, \lambda_2, \lambda_3 \in \mathbb{C}
  \right\},
  \end{equation*}
 where $\mathbb{O}$ is the algebra of complexified octonions and $\overline{\sigma}$ is the conjugate of $\sigma$ with respect to the octonionic conjugation. For any $A,B \in V_{27}$, we put $A \star B = \dfrac{AB + BA}{2}$. This is a commutative (but non-associative) product on $V_{27}$, which endows $V_{27}$ with a structure of Jordan algebra.

The determinant of any element of $V_{27}$ is well defined and we get a determinant map, say $\mathrm{Det} \in S^3 V_{27}^*$. The vanishing locus of $\mathrm{Det}$ in $\mathbb{P}(V_{27})$ is the $\mathrm{E}_6$ invariant cubic. Let us write:

\begin{equation*}
\begin{split}
\sigma_1 & = y_1 \textbf{1} + y_2 \textbf{i} + y_3 \textbf{j} + y_4 \textbf{k} + y_5 \textbf{l} + y_6 \textbf{m} + y_7 \textbf{n} + y_8 \textbf{o} \\
\sigma_2 & = y_9 \textbf{1} + y_{10} \textbf{i} + y_{11} \textbf{j} + y_{12} \textbf{k} + y_{13} \textbf{l} + y_{14} \textbf{m} + y_{15} \textbf{n} + y_{16} \textbf{o} \\
\sigma_2 & = y_{17} \textbf{1} + y_{18} \textbf{i} + y_{19} \textbf{j} + y_{20} \textbf{k} + y_{21} \textbf{l} + y_{22} \textbf{m} + y_{23} \textbf{n} + y_{24} \textbf{o} \\
\lambda_1& = y_{25} \\
\lambda_2 & = y_{26}  \\
\lambda_3 &= y_{27},\\
\end{split}
\end{equation*}
where $\textbf{1}, \textbf{i}, \textbf{j}, \textbf{k}, \textbf{l}, \textbf{m}, \textbf{n}, \textbf{o}$ is a standard basis for $\mathbb{O}$ and $y_1, \ldots, y_{27} \in \mathbb{C}$. Then, a \textit{Macaulay2} \cite{M2} computation shows that:

\begin{equation*}
\begin{split}
& \mathrm{Det} \left(  \begin{pmatrix}
 \lambda_1 & \sigma_1 & \overline{\sigma_2} \\
 \overline{\sigma_1} & \lambda_2 & \sigma_3 \\
 \sigma_2 & \overline{\sigma_3} & \lambda_3\\
 \end{pmatrix} \right) =   \\
 & 2y_1y_9y_{17} - 2y_2y_{10}y_{17}-2y_3y_{11}y_{17}-2y_4y_{12}y_{17}-2y_5y_{13}y_{17}-2y_6y_{14}y_{17}\\
                                   &-2y_7y_{15}y_{17}-2y_8y_{16}y_{17}-2y_2y_9y_{18}-2y_1y_{10}y_{18}-2y_4y_{11}y_{18}+2y_3y_{12}y_{18}\\
                                   &-2y_6y_{13}y_{18}+2y_5y_{14}y_{18}+2y_8y_{15}y_{18}-2y_7y_{16}y_{18}-2y_3y_9y_{19}+2y_4y_{10}y_{19}-2y_1y_{11}y_{19}\\
                                   &-2y_2y_{12}y_{19}-2y_7y_{13}y_{19}-2y_8y_{14}y_{19}+2y_5y_{15}y_{19}+2y_6y_{16}y_{19}-2y_4y_9y_{20}-2y_3y_{10}y_{20}\\
                                   &+2y_2y_{11}y_{20}-2y_1y_{12}y_{20}-2y_8y_{13}y_{20}+2y_7y_{14}y_{20}-2y_6y_{15}y_{20}+2y_5y_{16}y_{20}-2y_5y_9y_{21}\\
                                   &+2y_6y_{10}y_{21}+2y_7y_{11}y_{21}+2y_8y_{12}y_{21}-2y_1y_{13}y_{21}-2y_2y_{14}y_{21}-2y_3y_{15}y_{21}-2y_4y_{16}y_{21}\\
                                   &-2y_6y_9y_{22}-2y_5y_{10}y_{22}+2y_8y_{11}y_{22}-2y_7y_{12}y_{22}+2y_2y_{13}y_{22}-2y_1y_{14}y_{22}+2y_4y_{15}y_{22}\\
                                   &-2y_3y_{16}y_{22}-2y_7y_9y_{23}-2y_8y_{10}y_{23}-2y_5y_{11}y_{23}+2y_6y_{12}y_{23}+2y_3y_{13}y_{23}-2y_4y_{14}y_{23}\\
                                   &-2y_1y_{15}y_{23}+2y_2y_{16}y_{23}-2y_8y_9y_{24}+2y_7y_{10}y_{24}-2y_6y_{11}y_{24}-2y_5y_{12}y_{24}\\
                                   &+2y_4y_{13}y_{24}+2y_3y_{14}y_{24}-2y_2y_{15}y_{24}-2y_1y_{16}y_{24}-y_{17}^2y_{25}-y_{18}^2y_{25}-y_{19}^2y_{25}\\
                                   &-y_{20}^2y_{25}-y_{21}^2y_{25}-y_{22}^2y_{25}-y_{23}^2y_{25}-y_{24}^2y_{25}-y_9^2y_{26}-y_{10}^2y_{26}-y_{11}^2y_{26}-y_{12}^2y_{26}\\
                                   &-y_{13}^2y_{26}-y_{14}^2y_{26}-y_{15}^2y_{26}-y_{16}^2y_{26}-y_1^2y_{27}-y_2^2y_{27}-y_3^2y_{27}-y_4^2y_{27}-y_5^2y_{27}\\
                                   &-y_6^2y_{27}-y_7^2y_{27}-y_8^2y_{27}+y_{25}y_{26}y_{27}
 \end{split} 
\end{equation*}

The Hessian matrix of $\mathrm{Det}$ gives a $27*27$ symmetric matrix with linear entries (say $M$) which is part of a matrix factorization of $\mathrm{Det}$. As a consequence, we get an exact sequence:
 
 \[ 0 \longrightarrow V_{27} \otimes \OO_{\mathbb{P}(V_{27})}(-1) \stackrel{M}\longrightarrow V_{27} \otimes \OO_{\mathbb{P}(V_{27})} \longrightarrow i_*(\E) \longrightarrow 0, \]
 
 where $i_*(\E)$ is the push-forward of a rank-$9$ coherent sheaf on $\mathcal{C}_{\mathrm{E}_6}$. The jumping locus of $\E$ is the singular locus of $\mathcal{C}_{\mathrm{E}_6}$, that is the Cayley plane $\mathbb{OP}^2$ (which is a $16$-dimensional projective variety homogeneous under $\mathrm{E}_6$). 
 \bigskip
 
The intersection $X = \mathcal{C} \cap \mathbb{P}(L_X)$ being transverse, we can restrict the above exact sequence to $\mathbb{P}(L_X)$ and we get:

\begin{equation} \label{eq1}
0 \longrightarrow V_{27} \otimes \OO_{\mathbb{P}(L_X)}(-1) \stackrel{M|_{L_X}}\longrightarrow V_{27} \otimes \OO_{\mathbb{P}(L_X)} \longrightarrow i_*(\E_X) \longrightarrow 0,
\end{equation}
 
 where $i_*(\E_X)$ is the push-forward of a rank-$9$ vector bundle on $\mathcal{C} \cap \mathbb{P}(L_X) = X$. It is shown in \cite{maniliev} that $\E_X$ is a $3$-spherical vector bundle on $X$. In \cite{maniliev}, this result is proved via a detailed cohomological study of $\E$ and its twists by line bundles. Their argument ultimately relies on computer-aided (via \textit{Lie} \cite{Lie}) representation theoretic calculations. We propose a different approach which is also based on computer-aided calculations (via \textit{Macaulay2}).

The exact sequence (\ref{eq1}) shows that $\E_X(-1)$ and $\E_X(-2)$ are in $\A_X$, which is a Calabi-Yau category of dimension $3$. By Serre duality, in order to prove that $\E_X$ a $3$-spherical vector bundle, we only have to prove that:

\[ \HHH(\E_X,\E_X) = \mathbb{C} \ \ \textrm{and} \ \ \mathrm{Ex}^{1}(\E_X,\E_X) = 0.\]

We first prove that $\HHH(\E_X,\E_X) = \mathbb{C}.\mathrm{id}$. It is proved in \cite{orlov1}, there is an equivalence of triangulated categories:

\[ \A_X \simeq \mathrm{GrMF}(\mathrm{Det}|_{L_X}), \]

where $\mathrm{GrMF}(\mathrm{Det}|_{L_X})$ is the homotopy category of graded matrix factorizations of $\mathrm{Det}|_{L_X}$ on the vector space $L_X$. Since the matrix $M|_{L_X}$ is part of a matrix factorization of $\mathrm{Det}|_{L_X}$, we can compute $\HHH(\E_X,\E_X)$ in $ \mathrm{GrMF}(\mathrm{Det}|_{L_X})$.

Let $f \in \HHH_{\mathrm{GrMF}(\mathrm{Det}|_{L_X})}(\E_X,\E_X)$. We can represent it as a pair of morphisms $A,B : \mathbb{C}^{27} \longrightarrow \mathbb{C}^{27}$ which makes the following diagram commutes:

\begin{equation*}
\xymatrix{\mathbb{C}^{27} \otimes \OO_{\mathbb{P}(L_X)}(-1) \ar[dd]^{A} \ar[rr]^{M|_{L_X}} & & \mathbb{C}^{27} \otimes \OO_{\mathbb{P}(L_X)} \ar[rr]^{M^{ad}|_{L_X}} \ar[dd]^{B} & & \mathbb{C}^{27} \otimes \OO_{\mathbb{P}(L_X)}(2) \ar[dd]^{A} \\
& & & & \\
\mathbb{C}^{27} \otimes \OO_{\mathbb{P}(L_X)}(-1) \ar[rr]^{M|_{L_X}} & & \mathbb{C}^{27} \otimes \OO_{\mathbb{P}(L_X)} \ar[rr]^{M^{ad}|_{L_X}} & & \mathbb{C}^{27} \otimes \OO_{\mathbb{P}(L_X)}(2)}
\end{equation*}
 
 where $M^{ad}$ is the unique matrix with quadratic entries in the $y_i$ such that $M  M^{ad} = M^{ad}  M = \mathrm{Det}.\mathrm{I}_{27}$. Let $x_1, \ldots, x_9$ be a system of coordinates for $L_X$, the matrix $M_{L_X}$ can easily be obtained by replacing the $y_i$ by their expression from the $x_j$ (and we know that these relations must be \textit{generic} as $L_X$ is generic in $\mathrm{Gr}(9,V_{27})$). Note that he matrices $A$ and $B$ are with constant coefficients. 
 
 As the first square in the above diagram commutes, we get a system of $729$ equations in the $x_i$ and in the coefficients of $A$ and $B$. Since the $x_i$ are independent, we get $6561$ linear equations in the coefficients of $A$ and $B$. We solve this system of equations with \textit{Macaulay2}. There is a one dimensional space of solutions to this system which is $A = B = \lambda.\mathrm{Id}_{27}$, $\lambda \in \mathbb{C}$. This proves that $\HHH(\E_X,\E_X) = \mathbb{C}$. 
 
\bigskip

We will now show that $\mathrm{Ext}^1(E,E) = 0$. Let $f \in \mathrm{Ext}^1_{\mathrm{GrMF}(\mathrm{Det}|_{L_X})}(\E_X,\E_X)$. We can represent it as a pair of morphisms $A: \mathbb{C}^{27} \otimes \OO_X(-1) \longrightarrow \mathbb{C}^{27}\otimes \OO_{X}$ and $B : \mathbb{C}^{27} \otimes \OO_X \longrightarrow \mathbb{C}^{27} \otimes \OO_{X}(2)$ which makes the following diagram commutes:

\begin{equation} \label{eq3}
\xymatrix{\mathbb{C}^{27} \otimes \OO_{\mathbb{P}(L_X)}(-1) \ar[dd]^{A} \ar[rr]^{M|_{L_X}} & & \mathbb{C}^{27} \otimes \OO_{\mathbb{P}(L_X)} \ar[rr]^{M^{ad}|_{L_X}} \ar[dd]^{B} & & \mathbb{C}^{27} \otimes \OO_{\mathbb{P}(L_X)}(2) \ar[dd]^{A} \\
& & & & \\
\mathbb{C}^{27} \otimes \OO_{\mathbb{P}(L_X)} \ar[rr]^{M^{ad}|_{L_X}} & & \mathbb{C}^{27} \otimes \OO_{\mathbb{P}(L_X)}(2) \ar[rr]^{M|_{L_X}} & & \mathbb{C}^{27} \otimes \OO_{\mathbb{P}(L_X)}(3)}
\end{equation}

We want to prove that the pair $(A,B)$ is zero modulo homotopies, which means that there exists $S,T : \mathbb{C}^{27} \longrightarrow \mathbb{C}^{27}$ such that:

\[ A = M|_{L_X} T + S  M|_{L_X} \ \ \textrm{and} \ \  B = T  M^{ad}|_{L_X} + M^{ad}|_{L_X}  S.\]

Since the fist square of the diagram (\ref{eq3}) commutes, we have $M^{ad}|_{L_X}  A = B  M_{L_X}$. Multiplying this relation by $M|_{L_X}$ on the left gives $\mathrm{Det}|_{L_X}.A = M|_{L_X}  B  M|_{L_X}$. As $A$ is with linear coefficient in the $x_i$, this shows that the coefficients of $ M|_{L_X}  B  M|_{L_X}$ are equal to linear forms in the $x$ multiplied by $\mathrm{Det}|_{L_X}$.

If we could show that this implies that the coefficients of $M|_{L_X}  B$ and $B  M|_{L_X}$ are divisible by $P$, then $T = \dfrac{1}{P}\left( B  M|_{L_X} \right)$ and $S = \dfrac{1}{P} \left( M|_{L_X}  B \right)$ would provide the homotopy we are looking for. We haven't been able to prove such a statement. Instead, we use a \textit{Macaulay2} algorithm to prove that $\mathrm{Ext}^1(\E_X,\E_X) = 0$. We first plug-in the equation of $\mathrm{Det}$ into \textit{Macaulay2}. 

\begin{verbatim}
i_1 : kk = ZZ/313

i_2 : V = kk[x_1..x_9,y_1..y_27]

i_3 : DET =
2y_1y_9y_17-2y_2y_10y_17-2y_3y_11y_17-2y_4y_12y_17-2y_5y_13y_17-2y_6y_14y_17
-2y_7y_15y_17-2y_8y_16y_17-2y_2y_9y_18-2y_1y_10y_18-2y_4y_11y_18+2y_3y_12y_18
-2y_6y_13y_18+2y_5y_14y_18+2y_8y_15y_18-2y_7y_16y_18-2y_3y_9y_19+2y_4y_10y_19
-2y_1y_11y_19-2y_2y_12y_19-2y_7y_13y_19-2y_8y_14y_19+2y_5y_15y_19+2y_6y_16y_19
-2y_4y_9y_20-2y_3y_10y_20+2y_2y_11y_20-2y_1y_12y_20-2y_8y_13y_20+2y_7y_14y_20
-2y_6y_15y_20+2y_5y_16y_20-2y_5y_9y_21+2y_6y_10y_21+2y_7y_11y_21+2y_8y_12y_21
-2y_1y_13y_21-2y_2y_14y_21-2y_3y_15y_21-2y_4y_16y_21-2y_6y_9y_22-2y_5y_10y_22
+2y_8y_11y_22-2y_7y_12y_22+2y_2y_13y_22-2y_1y_14y_22+2y_4y_15y_22
-2y_3y_16y_22-2y_7y_9y_23-2y_8y_10y_23-2y_5y_11y_23+2y_6y_12y_23+2y_3y_13y_23
-2y_4y_14y_23-2y_1y_15y_23+2y_2y_16y_23-2y_8y_9y_24+2y_7y_10y_24-2y_6y_11y_24
-2y_5y_12y_24+2y_4y_13y_24+2y_3y_14y_24-2y_2y_15y_24-2y_1y_16y_24-y_17^2y_25
-y_18^2y_25-y_19^2y_25-y_20^2y_25-y_21^2y_25-y_22^2y_25-y_23^2y_25
-y_24^2y_25-y_9^2y_26-y_10^2y_26-y_11^2y_26-y_12^2y_26-y_13^2y_26-y_14^2y_26
-y_15^2y_26-y_16^2y_26-y_1^2y_27-y_2^2y_27-y_3^2y_27-y_4^2y_27-y_5^2y_27
-y_6^2y_27-y_7^2y_27-y_8^2y_27+y_25y_26y_27
\end{verbatim} 

We then create the Hessian matrix of $\mathrm{DET}$:

\begin{verbatim}
i_4 : M = mutableMatrix(V,27,27)

i_5 : for k from 1 to 27 do (for i from 1 to 27 do 
(M_(k-1,i-1) = diff(y_k,diff(y_i,DET))))
\end{verbatim}
We create the generic equations of $L_X$ and we substitute the $y_i$ by these equations to get $\mathrm{Det}|_{L_X}$ and $M|_{L_{X}}$:
\begin{verbatim}
i_6 : K = random(ZZ^27,ZZ^9,Height=>30)

i_7 : K2 = K**V

i_8 : K3 = K2 * matrix{{x_1},{x_2},{x_3},{x_4},{x_5},{x_6},{x_7},{x_8},{x_9}}

i_9 : for j from 1 to 27 do DET = sub(DET, y_j => K3_(j-1,0))

i_10 : for i from 1 to 27 do (for k from 1 to 27 do
(for j from 1 to 27 do M_(k-1,i-1) = sub(M_(k-1,i-1), y_j => K3_(j-1,0))))

i_11 : MX= matrix(M)
\end{verbatim}
We finally compute the self-extensions of the coker of $M|_{L_X}$ over $\mathrm{Proj}\left( \mathbb{C}[x_1, \ldots, x_9]/(\mathrm{Det|_{L_X}}) \right)$.

\begin{verbatim}
i_12 : W = kk[x_1..x_9]

i_13 : DETW= sub(DET,W)

i_14 : MXW = sub(MX,W)

i_15 : J = ideal(DETW)

i_16 : WJ = W/J

i_17 : MXJ = MXW**WJ

i_18 : FX = coker MXJ

i_19 : X = Proj WJ

i_20 : EX = sheaf FX

i_21 : Ext^1(EX,EX)
\end{verbatim}
In about one hour and thirty minutes on a portable workstation, we get the desired answer:

\begin{verbatim}
o_21 : 0
\end{verbatim}
Computations over finite fields are certified by Macaulay$2$. The result above shows that $\mathrm{Ext}^1(\E_X,\E_X) = 0$ over $\mathbb{Z} \slash 313.\mathbb{Z}$. Hence, by semi-continuity, we have $\mathrm{Ext}^1(\E_X,\E_X) = 0$ over $\mathbb{Q}$ and then, the same holds over $\mathbb{C}$. This concludes the proof that $\E_X(-1)$ and $\E_X(-2)$ are $3$-spherical vector bundles in $\A_X$. Proposition \ref{prop1} then implies that the homological unit of $\A_X$ with respect to the rank function coming from $X$ is $\mathbb{C} \oplus \mathbb{C}[3]$.\\

\item Let $X \subset \mathbb{P}(1,1,1,1,1,1,2)$ be a generic double quartic fivefold. We checked in \cite{DQF} that $X$ is a linear section of the double cover of $\mathbb{P}(\Delta_+)$ ramified along the $\mathrm{Spin}_{12}$ invariant quartic $\mathcal{Q} \subset \mathbb{P}(\Delta_+)$ (here $\Delta_{+}$ is one of the $32$-dimensional half-spin representations for $\mathrm{Spin}_{12}$). Denote by $L_X \subset \Delta_+$ the $6$ dimensional vector space such that $X$ is the double cover of $\mathbb{P}(L_X)$ along $\mathcal{Q} \cap \mathbb{P}(L_X)$. As explained in \cite{kuz2}, we have a semi-orthogonal decomposition:
 \[ \DB(X) = \langle \A_X, \OO_{X}, \OO_{X}(1), \OO_{X}(2), \OO_{X}(3) \rangle, \]
where $\A_X$ is a $3$-dimensional Calabi-Yau category. We will prove below that the homological unit of $\A_X$ (with respect to the rank function coming from $X$) is $\mathbb{C} \oplus \mathbb{C}[3]$ by exhibiting a $3$-spherical vector bundle in $\A_X$.

Let us fix a system of coordinates on $\Delta_+^*$. Using the theory of exceptional quaternionic representations, we constructed in \cite{DQF} a $12 \times 12$ matrix, say $M$, with quadratic entries in the variables of $\Delta$ such that $M \times M = P_{\mathcal{Q}} . \mathrm{I}_{12}$, where $P_{\mathcal{Q}}$ is an equation for $\mathcal{Q}$ in the chosen variables of $\Delta_+$. As a consequence, we have an exact sequence:

\[ 0 \longrightarrow \mathbb{C}^{12} \otimes \OO_{\mathbb{P}(\Delta_+)}(-2) \stackrel{M}\longrightarrow \mathbb{C}^{12} \otimes \OO_{\mathbb{P}(\Delta_+)} \longrightarrow i_*\left( \F \right) \longrightarrow 0, \]

where $i_*(\F)$ is the push-forward of a rank-$6$ coherent sheaf on $\mathcal{Q}$. The jumping locus of $\F$ is the singular locus of $\mathcal{Q}$, that is the closure in $\mathbb{P}(\Delta_+)$ of a $24$-dimensional quasi-projective variety homogeneous under $\mathrm{Spin}_{12}$.

\bigskip

The intersection $X = \mathcal{Q} \cap \mathbb{P}(L_X)$ being transverse, we can restrict the above exact sequence to $\mathbb{P}(L_X)$ and we get:

\begin{equation} \label{eq2}
0 \longrightarrow \mathbb{C}^{12} \otimes \OO_{\mathbb{P}(L_X)}(-2) \stackrel{M}\longrightarrow \mathbb{C}^{12} \otimes \OO_{\mathbb{P}(L_X)} \longrightarrow i_*(\F_X) \longrightarrow 0,
\end{equation}
 
 where $i_*(\F_X)$ is the push-forward of a rank-$6$ vector bundle on $\mathcal{Q} \cap \mathbb{P}(L_X) = X$. We showed in \cite{DQF} that $\F_X$ is a $3$-spherical vector bundle on $X$. Furthermore, if we consider the semi-orthogonal decomposition:
 
 \[ \DB(X) = \langle \A_X, \OO_{X}, \OO_{X}(1), \ldots, \OO_{X}(3) \rangle \]
described above, the exact sequence (\ref{eq2}) shows that $\F_X(-1)$ and $\F_X(-2)$ are in $\A_X$. As they are $3$-spherical vector bundles, proposition \ref{prop1} allows to conclude that the homological unit of $\A_X$ with respect to the rank function coming from $X$ is $\mathbb{C} \oplus \mathbb{C}[3]$.

}
\end{enumerate}
\end{exem}

\begin{rem}
Let $X$ be a generic cubic fourfold an $\A_X$ the $K3$-category associated to $X$. It is shown in \cite{huy1} that for $\A_X$ does not contain any $2$-spherical object. We nevertheless prove in proposition \ref{prophodge} that the homological unit of $\A_X$ is $\mathbb{C} \oplus \mathbb{C}[2]$. This hilights the fact that the homological unit is useful in order to capture the <<topological>> properties of a Calabi-Yau category even in the absence of spherical objects.
\end{rem}

There is an analogous statement to that of \ref{prop1} holds for Calabi-Yau categories coming from quivers with potentials, namely we have:

\begin{prop} \label{quiver}
Let $(\mathrm{Q},W)$ be a quiver without loops and $W$ be a reduced potential for $\mathrm{Q}$. Let $\A_{(\mathrm{Q},W)}$ be the Calabi-Yau-$3$ algebra associated to $(\mathrm{Q},W)$. Let $D_{fd}(\A_{(\mathrm{Q},W)})$ be the derived category of finite dimensional DG $\A_{(\mathrm{Q},W)}$-modules. For $\E \in D_{fd}(\A_{(\mathrm{Q},W)})$, we let $\mathrm{rank}(\E)$ be the rank of $\E$ as a representation of $\mathrm{Q}$. Then, the homological unit of $D_{fd}(\A_{(\mathrm{Q},W)})$ with respect to $\mathrm{rank}$ is $\mathbb{C} \oplus \mathbb{C}[3]$.
\end{prop}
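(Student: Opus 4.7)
The plan is to replicate the scheme of Proposition \ref{prop1}: exhibit a $3$-spherical object of nonzero rank in $D_{fd}(\A_{(\mathrm{Q},W)})$, and then construct the homological unit maps $i_{\F}, t_{\F}$ verbatim from the proof of \emph{loc.\ cit.} The statement thus splits into two sub-problems: producing the spherical object, and transplanting the trace / Serre duality construction from the geometric DM-stack setting to the DG setting.

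For the spherical object, I would take the simple $\A_{(\mathrm{Q},W)}$-module $S_i$ attached to a vertex $i \in \mathrm{Q}_0$, viewed as a one-dimensional representation supported at $i$; its rank as a representation of $\mathrm{Q}$ is $1 \neq 0$. Its graded endomorphism algebra in $D_{fd}(\A_{(\mathrm{Q},W)})$ is controlled by the Ginzburg / Keller presentation of $\A_{(\mathrm{Q},W)}$: $\mathrm{Ext}^{1}(S_i, S_j)$ has dimension equal to the number of arrows $i \to j$ in $\mathrm{Q}$, and $\mathrm{Ext}^{2}(S_i, S_j)$ has dimension equal to the number of independent cyclic derivatives $\partial_a W$ starting at $i$ and ending at $j$. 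The hypothesis that $\mathrm{Q}$ has no loops forces $\mathrm{Ext}^{1}(S_i, S_i) = 0$; Calabi-Yau-$3$ duality then gives $\mathrm{Ext}^{2}(S_i, S_i) \simeq \mathrm{Ext}^{1}(S_i, S_i)^{*} = 0$, the reducedness of $W$ serving to ensure that the Ginzburg model is the correct derived enhancement so that these vanishings are genuine (no hidden linear relations collapse the quiver). Since Calabi-Yau-$3$ duality also yields $\mathrm{Ext}^{3}(S_i, S_i) \simeq \HHH(S_i,S_i)^{*} = \mathbb{C}$, we conclude $\mathrm{Ext}^{\bullet}(S_i, S_i) = \mathbb{C} \oplus \mathbb{C}[3]$, so $S_i$ is $3$-spherical of rank $1$.

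By definition of homological unit this already forces $\mathfrak{T}^{\bullet} \hooklongrightarrow \mathrm{Ext}^{\bullet}(S_i, S_i) = \mathbb{C} \oplus \mathbb{C}[3]$, which provides the upper bound. For the reverse direction, for each $\F \in D_{fd}(\A_{(\mathrm{Q},W)})$ I would set $i^{0}_{\F}(a) = a \cdot \mathrm{id}_{\F}$ and $t^{0}_{\F}(f) = \mathrm{Trace}(f)$, the trace being the standard one on endomorphisms of a finite-dimensional DG-module (well-defined and quasi-isomorphism invariant when taken on minimal models), and then define $i^{3}_{\F}$ and $t^{3}_{\F}$ by dualising through the Calabi-Yau-$3$ pairing $\mathrm{Ext}^{3}(\F, \F) \simeq \HHH(\F, \F)^{*}$, exactly as in the proof of Proposition \ref{prop1}. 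Functoriality of the trace, functoriality of the Serre pairing, and triviality of the algebra structure on $\mathbb{C} \oplus \mathbb{C}[3]$ then yield all the compatibilities required by Definition \ref{homounit}.

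The main obstacle I anticipate is justifying the trace and Serre-duality machinery in the DG setting, since Proposition \ref{prop1} is written only for admissible subcategories of $D^{perf}(\X)$ on a projective DM stack. However, its proof only uses the existence of a functorial trace on endomorphisms compatible with the rank together with Calabi-Yau-$p$ duality, and both are available for $D_{fd}(\A_{(\mathrm{Q},W)})$: the finite-dimensionality of the modules provides the classical trace, and Keller's results on Ginzburg algebras supply the Calabi-Yau-$3$ Serre functor and the compatibility between the abstract Serre pairing and the bilinear form $(f,g) \mapsto \mathrm{Trace}(f \circ g)$. Once these ingredients are recorded, the argument of Proposition \ref{prop1} transfers word for word.
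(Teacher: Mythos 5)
Your proposal is correct and follows the same two-step architecture as the paper's proof: the upper bound $\mathfrak{T}^{\bullet} \hooklongrightarrow \mathbb{C} \oplus \mathbb{C}[3]$ comes from the rank-one simple modules $S_i$, which are $3$-spherical precisely because $\mathrm{Q}$ has no loops (the paper simply cites Lemma 2.15 of its reference \cite{keyang} for $\mathrm{Hom}^{\bullet}(S_i,S_i) = \mathbb{C} \oplus \mathbb{C}[3]$, which is exactly your computation via $\mathrm{Ext}^1(S_i,S_i)=0$ and Calabi--Yau duality), and the lower bound is obtained by exhibiting the pair $(i^0_{\E}, t^0_{\E})$ in degree $0$ and dualising through the Calabi--Yau-$3$ pairing to get $(i^3_{\E}, t^3_{\E})$. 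The one place where you genuinely diverge is the construction of $t^0_{\E}$: you take the classical supertrace on a finite-dimensional DG module, whereas the paper builds $t^0_{\E}$ by composing the bulk-boundary map $\Theta_{\E} : \mathrm{Hom}^{\bullet}(\E,\E) \longrightarrow \HH_0(D_{fd}(\A_{(\mathrm{Q},W)}))$ of \cite{politrob} with the sum of the linear forms dual to the Chern characters $\Theta_{S_i}(\mathrm{id}_{S_i})$ of the simples. Your choice is more elementary and suffices: Definition \ref{homounit} asks of $t_{\F}$ only that it be a graded linear map splitting $i_{\F}$ up to the factor $\mathrm{rank}(\F)$ (no functoriality is demanded of $t_{\F}$, as the paper itself remarks), and the supertrace of $\mathrm{id}_{\E}$ is the alternating sum of dimensions, which is the rank of $\E$ once the rank of a representation is extended to $D_{fd}$ by additivity on triangles and sign change under shift. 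The paper's formulation via Chern characters in $\HH_0$ mirrors the geometric trace of Proposition \ref{prop1} and depends on the choice of a complement to the span of the $\Theta_{S_i}(\mathrm{id}_{S_i})$, but for the purposes of this proposition the two constructions are interchangeable.
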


We refer to \cite{ginz, keyang} for the construction of $\A_{(Q,W)}$ and to \cite{ginz, keke} for proofs that the category $D_{fd}(\A_{(Q,W)})$ is Calabi-Yau of dimension $3$.

\begin{proof}
We denote by $\mathfrak{T}^{\bullet}_{D_{fd}(\A_{(Q,W)})}$ a homological unit of $\A_{(Q,W)}$ with respect to the $\mathrm{rank}$. Let $i$ be a vertex of $Q$ and let $S_i$ be the simple module corresponding to $i$. Its rank as a $Q$-module is $1$. Since there are no loops at $i$, we know that $\mathrm{Hom}^{\bullet}(S_i, S_i) = \mathbb{C} \oplus \mathbb{C}[3]$ (see lemma 2.15 in \cite{keyang}, for instance). As $\mathrm{rank}(S_i) = 1 \neq 0$, we have an injection of graded algebras:

\[ \mathfrak{T}^{\bullet}_{D_{fd}(\A_{(Q,W)})} \hooklongrightarrow \mathrm{Hom}^{\bullet}(S_i,S_i) = \mathbb{C} \oplus \mathbb{C}[3]. \]

\bigskip

We now prove that $\mathbb{C} \oplus \mathbb{C}[3]$ is indeed the homological unit of $D_{fd}(\A_{(Q,W)})$ with respect to the rank function coming from representations of $Q$. By \cite{politrob}, for any $\E \in D_{fd}(A_{(Q,W)})$, we have a linear map (called the \textit{bulk-boundary map} in \cite{politrob}):

\[ \Theta_{\E} : \mathrm{Hom}^{\bullet}(\E,\E) \longrightarrow \HH_{0}(D_{fd}(\A_{(Q,W)})). \]

The vector $\Theta_{\E}(id_\E) \in \HH_{0}(D_{fd}(\A_{(Q,W)}))$ is called the \textit{Chern character} of $\E$. The set $\{S_i \}_{i \in Q}$ is a minimal system of generators of $D_{fd}(\A_{(Q,W)})$, from which we deduce that the set $\{\Theta_{S_i}(id_{S_i}) \}_{i \in Q}$ is a free family of $\HH_0(\D_{fd}(\A_{(Q,W)}))$ as a $\mathbb{C}$-vector space. Let $\Theta_{S_i}(id_{S_i})^*$ be the linear form on $\HH_{0}(D_{fd}(\A_{(Q,W)}))$ which is orthogonal to $\Theta_{S_i}(id_{S_i})$ (we therefore fix a complementary subspace of $\mathrm{Vect}\{\Theta_{S_i}(id_{S_i}) \}$ in $\HH_{0}(D_{fd}(\A_{(Q,W)}))$). For any $\E \in D_{fd}(\A_{(Q,W)})$, we denote by:

\[ t^0_{\E} := \left( \bigoplus_{i \in Q} \Theta_{S_i}(id_{S_i})^* \right) \circ \Theta_{\E} : \HHH(\E,\E) \longrightarrow \mathbb{C}.\]

It is easily checked that $t_{\E}(id_{\E}) = \mathrm{rank}(\E)$, for any $\E \in D_{fd}(\A_{(Q,W)})$. For all $\E \in  D_{fd}(\A_{(Q,W)})$, we then we define:
\begin{equation*}
\begin{split}
 i^0_{\E}(a) : & \ \mathbb{C} \longrightarrow \HHH(\E,\E) \\
                 & \ a \longmapsto a.id_{\E}
 \end{split}
\end{equation*}

It is obvious that $i_{\E}$ is functorial in $\E$ and that $t^0_{\E}(i^0_{\E})(a) = a.\mathrm{rank}(\E)$, for any $\E \in D_{fd}(\A_{(Q,W)})$. Furthermore, as $D_{fd}(\A_{(Q,W)})$ is Calabi-Yau category of dimension $3$, we have functorial isomorphisms:

\[ \mathrm{Ext}^{3}(\E,\E) \simeq \HHH(\E,\E)^*, \]
for all $\E \in D_{fd}(\A_{(Q,W)})$. As a consequence, the pair of morphisms:

\[ i^{3}_{\E} = \left(t^{0}_{\E} \right)^* : \mathbb{C} \longrightarrow \mathrm{Ext}^{3}(\E,\E), \ \ t^{3}_{\E} = \left(i_{\E}^{0} \right)^{*} : \mathrm{Ext}^{3}(\E,\E) \longrightarrow \mathbb{C} \]
is well-defined and satisfies $t_{\E}^{3} \circ i_{\E}^{3} = \left(t_{\E}^{0} \circ i_{\E}^{0}\right)^{*} = \mathrm{rank}(\E).\mathrm{id}_{\mathbb{C}}$. The algebra structure on $\mathbb{C} \oplus \mathbb{C}[3]$ is trivial. Hence, the functoriality of the isomorphisms $\HHH(\E,\E)^* \simeq \mathrm{Ext}^{3}(\E,\E)$, the functoriality of $i_{\E}$ and the properties of $t_{\E}$ imply that $\mathbb{C} \oplus \mathbb{C}[3]$ is indeed a homological unit for $D_{fd}(\A_{(Q,W)})$.

\end{proof}

\end{subsection}

\begin{subsection}{Invariance of the homological unit}
In this section, we will be interested in the following question:

\begin{quest}
Let $\A$ be a triangulated category and let $\mathrm{rank}_1$, $\mathrm{rank}_2$ be two non-trivial rank functions on $\A$. We denote by $\mathfrak{T}_{\A,1}^{\bullet}$, $\mathfrak{T}_{\A,2}^{\bullet}$ homological units of $\A$ with respect to $\mathrm{rank}_1$ and $\mathrm{rank}_2$. When do we have a ring isomorphism $\mathfrak{T}_{\A,1}^{\bullet} \simeq \mathfrak{T}_{\A,2}^{\bullet}$?
\end{quest}

In the geometric setting, a special case of the above question is the:

\begin{quest}
Let $X,Y$ be smooth projective varieties (over $\mathbb{C}$). Let $\A_X$ and $\A_Y$ be  full admissible subcategories of $\DB(X)$ and $\DB(Y)$. Let $\Phi : \DB(X) \longrightarrow \DB(Y)$ be a Fourier-Mukai functor such that $\Phi$ induces an equivalence between $\A_X$ and $\A_Y$. Let $\mathfrak{T}_{\A_X}^{\bullet}$ and $\mathfrak{T}_{\A_Y}^{\bullet}$ be homological units of $\A_X$ and $\A_Y$ with respect to the rank function coming from $\DB(X)$ and $\DB(Y)$. Assume that $\A_X$ and $\A_Y$ both contain an object which rank is not zero. When do we have a ring isomorphism $\mathfrak{T}_{\A_X}^{\bullet} \simeq \mathfrak{T}_{\A_Y}^{\bullet}$?
\end{quest}

When $\A_X = \DB(X)$ and $\A_Y = \DB(Y)$, we proved in \cite{homounit} that there is a  ring isomorphism $\mathfrak{T}_{\A_X}^{\bullet} \simeq \mathfrak{T}_{\A_Y}^{\bullet}$ provided that \textbf{one} of the following conditions holds:
\begin{itemize}
\item \textit{The kernel giving the equivalence is generically a (possibly shifted) vector bundle on $X \times Y$},
\item \textit{the Chern classes of the kernel giving the equivalence vanish in degree less than $2 \dim X -1$},
\item \textit{the Hodge algebra $\bigoplus_{p=0}^{\dim X} H^{p}(X,\Omega^p_X)$ and $\bigoplus_{p=0}^{\dim X} H^{p}(Y,\Omega^p_Y)$ are generated in degree $1$},
\item \textit{the varieties $X$ and $Y$ have dimension less or equal to $4$}.
\end{itemize}
In case $\A_X = \DB(X)$ and $\A_Y = \DB(Y)$, we expect that there is always a ring isomorphism $\mathfrak{T}_{\A_X}^{\bullet} \simeq \mathfrak{T}_{\A_Y}^{\bullet}$, but we haven't been able to prove it. 

\bigskip

Below, we prove an invariance result for the homological unit of any triangulated category provided that this category contains enough unitary objects with respect to the homological unit under study. We will provide many examples where this result can be applied.

\begin{theo} \label{invaunit}
\begin{enumerate}
\item Let $\A$ be a triangulated category and let $\mathrm{rank}_1$, $\mathrm{rank}_2$ be two non-trivial rank functions on $\A$. We denote by $\mathfrak{T}_{\A,1}^{\bullet}$, $\mathfrak{T}_{\A,2}^{\bullet}$ the homological units associated to $\mathrm{rank}_1$ and $\mathrm{rank}_2$. Let $$\mathrm{cl} : \A \longrightarrow \mathrm{K}_{0}(\A)$$ be the class map and let $\A_{unitary}^{(1)}$ the subset of $\A$ consisting of unitary objects with respect to $\mathfrak{T}_{\A,1}^{\bullet}$. Assume that $\mathrm{cl}(\A_{unitary}^{(1)})$ generates $\mathrm{K}_0(\A) \otimes \mathbb{C}$. Then, there is a injection of graded rings:

\[ \mathfrak{T}_{\A,2}^{\bullet} \hooklongrightarrow \mathfrak{T}_{\A,1}^{\bullet}.\]
\item The same conclusion as above holds if we only assume that $\mathrm{cl}(\A_{unitary}^{(1)})$ generates $\mathrm{K}_{num}(\A) \otimes \mathbb{C}$ provided that $\mathrm{rank}_1$ and $\mathrm{rank}_2$ are numerical rank functions on $\A$.
\end{enumerate}
\end{theo}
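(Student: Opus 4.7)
The plan is to produce a single object $\F_0 \in \A_{unitary}^{(1)}$ whose rank with respect to $\mathrm{rank}_2$ is non-zero, and then to extract the desired injection from the defining property of the homological unit $\mathfrak{T}_{\A,2}^{\bullet}$ applied at $\F_0$.

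First I would observe that any rank function on $\A$, being additive on exact triangles and changing sign under shifts, descends to a $\mathbb{Z}$-linear form on $\mathrm{K}_0(\A)$; extending $\mathbb{C}$-linearly, $\mathrm{rank}_2$ becomes a $\mathbb{C}$-linear functional on $\mathrm{K}_0(\A) \otimes \mathbb{C}$ which is non-zero by hypothesis. If every element of $\mathrm{cl}(\A_{unitary}^{(1)})$ satisfied $\mathrm{rank}_2 = 0$, then by the generation hypothesis $\mathrm{rank}_2$ would vanish identically on $\mathrm{K}_0(\A) \otimes \mathbb{C}$, contradicting non-triviality. Hence there exists $\F_0 \in \A_{unitary}^{(1)}$ with $\mathrm{rank}_2(\F_0) \neq 0$.

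For this $\F_0$, the definition of the homological unit $\mathfrak{T}_{\A,2}^{\bullet}$ supplies a graded algebra morphism $i_{\F_0} : \mathfrak{T}_{\A,2}^{\bullet} \longrightarrow \HHH^{\bullet}_{\A}(\F_0, \F_0)$ and a graded vector space morphism $t_{\F_0} : \HHH^{\bullet}_{\A}(\F_0, \F_0) \longrightarrow \mathfrak{T}_{\A,2}^{\bullet}$ satisfying $t_{\F_0} \circ i_{\F_0} = \mathrm{rank}_2(\F_0) \cdot \mathrm{id}_{\mathfrak{T}_{\A,2}^{\bullet}}$. Since $\mathrm{rank}_2(\F_0) \neq 0$, this composition is a vector space automorphism, forcing $i_{\F_0}$ to be injective. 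Using the unitarity isomorphism $\HHH^{\bullet}_{\A}(\F_0, \F_0) \simeq \mathfrak{T}_{\A,1}^{\bullet}$ of graded rings, composing with $i_{\F_0}$ yields the desired injection of graded rings $\mathfrak{T}_{\A,2}^{\bullet} \hooklongrightarrow \mathfrak{T}_{\A,1}^{\bullet}$.

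For the second item, the argument applies verbatim once one replaces $\mathrm{K}_0(\A) \otimes \mathbb{C}$ by $\mathrm{K}_{num}(\A) \otimes \mathbb{C}$, using the fact that numerical rank functions descend to linear forms on the complexified numerical $\mathrm{K}$-theory. The only step with any content is the pigeonhole argument producing $\F_0$, and the only mild point of care is that $i_{\F_0}$ is built into the definition of a homological unit as a morphism of graded algebras, so that the resulting injection $\mathfrak{T}_{\A,2}^{\bullet} \hooklongrightarrow \mathfrak{T}_{\A,1}^{\bullet}$ is automatically a graded ring injection without any further work.
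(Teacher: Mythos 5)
Your proposal is correct and follows essentially the same route as the paper: use the generation hypothesis together with the non-triviality of $\mathrm{rank}_2$ to find a unitary object $\F_0 \in \A_{unitary}^{(1)}$ with $\mathrm{rank}_2(\F_0) \neq 0$, then combine the defining splitting $t_{\F_0} \circ i_{\F_0} = \mathrm{rank}_2(\F_0)\cdot\mathrm{id}$ with the unitarity isomorphism $\HHH^{\bullet}_{\A}(\F_0,\F_0) \simeq \mathfrak{T}_{\A,1}^{\bullet}$. Your explicit justification that $i_{\F_0}$ is injective is a small elaboration of a step the paper states without comment; otherwise the two arguments coincide, including the verbatim transfer to the numerical case.
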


\begin{proof}
By definition of rank functions, both $\mathrm{rank}_1$ and $\mathrm{rank}_2$ can be lifted to rank functions:

\begin{equation*}
\mathrm{rank}_1 \otimes \mathbb{C}, \, \mathrm{rank}_2 \otimes \mathbb{C} \ : \mathrm{K}_{0}(\A) \longrightarrow \mathbb{C}.
\end{equation*}
The function $\mathrm{rank}_2$ is non trivial, so the same holds for $\mathrm{rank}_2 \otimes \mathbb{C}$. Furthermore, we know by hypothesis that $\mathrm{cl}(\A_{unitary}^{(1)})$ generates $\mathrm{K}_{0}(\A)$. Hence, there exists $\F \in \A_{unitary}^{(1)}$ such that $\mathrm{rank}_2 \otimes \mathbb{C}(\F) \neq 0$, that is $\mathrm{rank}_2(\F) \neq 0$. By definition of homological units, we have a graded ring embedding:

\[ \mathfrak{T}_{\A,2}^{\bullet} \hooklongrightarrow \HHH^{\bullet}(\F, \F). \]

Since $\F \in \A_{unitary}^{(1)}$, this turns into a graded ring embedding:

\[ \mathfrak{T}_{\A,2}^{\bullet} \hooklongrightarrow \mathfrak{T}_{\A,1}^{\bullet}.\]

\bigskip

\noindent The proof in the numerical case is exactly the same.
\end{proof}

\bigskip

\noindent As a consequence of the above result, we get an effective criterion to determine whether the homological units related to different rank functions on a given triangulated category are isomorphic:

\begin{cor} \label{corrro1}
\begin{enumerate}
\item Let $\A$ be a triangulated category and let $\mathrm{rank}_1$, $\mathrm{rank}_2$ be two non-trivial rank functions on $\A$. We denote by $\mathfrak{T}_{\A,1}^{\bullet}$, $\mathfrak{T}_{\A,2}^{\bullet}$ the homological units associated to $\mathrm{rank}_1$ and $\mathrm{rank}_2$. Assume that both $\mathrm{cl}(\A_{unitary}^{(1)})$ and $\mathrm{cl}(\A_{unitary}^{(2)})$ generate $\mathrm{K}_{0}(\A) \otimes \mathbb{C}$. Then, there is an isomorphism of graded rings:

\[ \mathfrak{T}_{\A,2}^{\bullet} \simeq \mathfrak{T}_{\A,1}^{\bullet}.\]

\item The same conclusion as above holds if we only assume that $\mathrm{cl}(\A_{unitary}^{(1)})$ and $(\A_{unitary}^{(2)})$ generate $\mathrm{K}_{num}(\A) \otimes \mathbb{C}$ provided that $\mathrm{rank}_1$ and $\mathrm{rank}_2$ are numerical rank functions on $\A$.
\end{enumerate}
\end{cor}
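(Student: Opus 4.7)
The corollary is essentially the symmetric version of Theorem \ref{invaunit}, so the plan is to apply the theorem twice, in both directions, and then upgrade the two injections to an isomorphism.

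First, I would invoke Theorem \ref{invaunit} with the roles of $\mathrm{rank}_1$ and $\mathrm{rank}_2$ as stated, using the hypothesis that $\mathrm{cl}(\A_{unitary}^{(1)})$ generates $\mathrm{K}_0(\A) \otimes \mathbb{C}$ (resp.\ $\mathrm{K}_{num}(\A) \otimes \mathbb{C}$ in item 2). This yields an injection of graded rings
\[ \mathfrak{T}_{\A,2}^{\bullet} \hooklongrightarrow \mathfrak{T}_{\A,1}^{\bullet}. \]
Then I would apply Theorem \ref{invaunit} a second time, swapping the roles of the two rank functions and using the hypothesis that $\mathrm{cl}(\A_{unitary}^{(2)})$ also generates $\mathrm{K}_0(\A) \otimes \mathbb{C}$ (resp.\ $\mathrm{K}_{num}(\A) \otimes \mathbb{C}$). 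This produces an injection in the opposite direction
\[ \mathfrak{T}_{\A,1}^{\bullet} \hooklongrightarrow \mathfrak{T}_{\A,2}^{\bullet}. \]

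The remaining point is to pass from two opposite injections to an isomorphism. Since $\A$ is smooth proper, its Hochschild cohomology is finite-dimensional in each degree, and by item $3$ of Remark \ref{rem1} each homological unit embeds (as a graded algebra) into $\HH^{\bullet}(\A)$. Therefore each graded piece $\mathfrak{T}_{\A,i}^{k}$ is finite-dimensional. Comparing dimensions degree by degree through the two injections forces $\dim \mathfrak{T}_{\A,1}^{k} = \dim \mathfrak{T}_{\A,2}^{k}$ for every $k$, so each of the injective graded ring maps is surjective in every degree and is therefore an isomorphism of graded rings.

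For item $2$, the argument is literally the same, replacing $\mathrm{K}_0(\A)$ by $\mathrm{K}_{num}(\A)$ throughout and using the numerical version of Theorem \ref{invaunit}. The only subtle point, and the one I would check carefully, is that finite-dimensionality of the graded pieces of $\mathfrak{T}_{\A,i}^{\bullet}$ is genuinely available under the standing smooth properness convention, so that the elementary linear-algebra step ``mutually injective finite-dimensional spaces are isomorphic'' applies. Once that is secured, no further work is needed: the conclusion is an immediate corollary rather than a new argument.
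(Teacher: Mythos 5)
Your proposal is correct and is exactly the argument the paper intends (the paper states the corollary as an immediate consequence of Theorem \ref{invaunit} without writing out a proof): apply the theorem in both directions to get mutual graded-ring injections, then use finite-dimensionality of the graded pieces (which holds since each unit embeds into $\mathrm{Ext}^{\bullet}(\F,\F)$ for a proper category) to conclude both injections are isomorphisms. Your extra care in justifying the passage from two opposite injections to an isomorphism is the right bookkeeping and introduces no gap.
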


Theorem \ref{invaunit} can also be fruitfully applied when the given category is Calabi-Yau of dimension $p \geq 1$ and the homological unit related to a rank function is $\mathbb{C} \oplus \mathbb{C}[p]$. If the classes of unitary objects related to this homological unit generate $\mathrm{K}(\A) \otimes \mathbb{C}$, then the homological unit associated to any other rank function on $\A$ is necessarily $\mathbb{C} \oplus \mathbb{C}[p]$.

\begin{cor} \label{corrro2}
\begin{enumerate}
\item Let $\A$ be a triangulated category which is Calabi-Yau category of dimension $p \geq 1$. Let $\mathrm{rank_1}$ be a non-trivial rank function on $\A$ such that $\mathbb{C} \oplus \mathbb{C}[p]$ is a homological unit for $\A$ with respect to $\mathrm{rank}_1$. Assume that $\mathrm{cl}(\A_{unitary}^{(1)})$ generates $\mathrm{K}_{0}(\A) \otimes \mathbb{C}$. Let $\mathrm{rank}_2$ be another non-trivial rank function on $\A$ and let $\mathfrak{T}_{\A,2}^{\bullet}$ be a homological unit on $\A$ with respect to $\mathrm{rank}_2$. Assume that there exists $\E \in \A$ which is unitary with respect to $\mathfrak{T}_{\A,2}^{\bullet}$. Then we have a graded ring isomorphism:

\[ \mathfrak{T}_{\A,2}^{\bullet} \simeq \mathbb{C} \oplus \mathbb{C}[p].\]
\item The same conclusion as above holds if we only assume that $\mathrm{cl}(\A_{unitary}^{(1)})$ generates $\mathrm{K}_{num}(\A) \otimes \mathbb{C}$ provided that $\mathrm{rank}_1$ and $\mathrm{rank}_2$ are numerical rank functions on $\A$.
\end{enumerate}
\end{cor}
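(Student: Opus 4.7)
The strategy is to reduce the statement to Theorem \ref{invaunit} and then upgrade the injection obtained there to an isomorphism using the unitary object $\E$ together with Calabi-Yau duality.

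First, I would apply Theorem \ref{invaunit} (respectively its numerical version) to the pair $(\mathrm{rank}_1, \mathrm{rank}_2)$. The hypothesis that $\mathrm{cl}(\A_{unitary}^{(1)})$ generates $\mathrm{K}_{0}(\A) \otimes \mathbb{C}$ (respectively $\mathrm{K}_{num}(\A) \otimes \mathbb{C}$) is precisely what Theorem \ref{invaunit} asks for, so it yields an injection of graded rings
\[ \mathfrak{T}_{\A,2}^{\bullet} \hooklongrightarrow \mathfrak{T}_{\A,1}^{\bullet} = \mathbb{C} \oplus \mathbb{C}[p]. \]
This immediately forces $\mathfrak{T}_{\A,2}^i = 0$ for every $i \notin \{0,p\}$, and gives $\dim \mathfrak{T}_{\A,2}^0 \leq 1$, $\dim \mathfrak{T}_{\A,2}^p \leq 1$.

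Next, I would observe that $\mathfrak{T}_{\A,2}^0 = \mathbb{C}$: any homological unit contains a distinguished nonzero element in degree zero, namely the image of $1$ under $i_{\F}$ (which equals $\mathrm{id}_{\F}$ for $\F \in \A$ with nonzero rank, exhibited inside $\HHH^{0}(\F,\F)$). Since $\mathfrak{T}_{\A,2}^0$ injects into $\mathbb{C}$ and is nonzero, it is all of $\mathbb{C}$. It then remains to show that $\mathfrak{T}_{\A,2}^p \neq 0$, and this is exactly what the hypothesis on the existence of a unitary object for $\mathfrak{T}_{\A,2}^{\bullet}$ guarantees: by definition of unitarity, $\HHH^{\bullet}(\E,\E) \simeq \mathfrak{T}_{\A,2}^{\bullet}$ as graded rings. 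Applying Calabi-Yau duality to $\E$ yields
\[ \HHH^{p}(\E,\E) \simeq \HHH^{0}(\E,\E)^{*} \simeq (\mathfrak{T}_{\A,2}^0)^{*} = \mathbb{C}, \]
hence $\mathfrak{T}_{\A,2}^p = \mathbb{C}$.

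Combining the three steps, $\mathfrak{T}_{\A,2}^{\bullet}$ and $\mathbb{C} \oplus \mathbb{C}[p]$ agree in each graded piece, and the injection from Theorem \ref{invaunit} is therefore an isomorphism. The ring structure on both sides is forced to be trivial in the sense that the only possibly nonzero product, which would land in degree $2p$, is automatically zero, so the injection is a graded ring isomorphism. The numerical case is identical, using part $(2)$ of Theorem \ref{invaunit} in place of part $(1)$.

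In fact, there is no serious obstacle: the whole content is Theorem \ref{invaunit}, which does the heavy lifting. The only mildly delicate point is recognizing that, in order to rule out the degenerate possibility $\mathfrak{T}_{\A,2}^{\bullet} = \mathbb{C}$ (concentrated in degree zero), one genuinely needs a unitary object with respect to $\mathfrak{T}_{\A,2}^{\bullet}$, which is precisely the remaining hypothesis of the corollary.
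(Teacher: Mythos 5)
Your proposal is correct and follows essentially the same route as the paper: apply Theorem \ref{invaunit} to get the injection $\mathfrak{T}_{\A,2}^{\bullet} \hooklongrightarrow \mathbb{C} \oplus \mathbb{C}[p]$, then use the unitary object $\E$ together with Serre duality (exactly as in the proof of Proposition \ref{prop1}) to produce the reverse graded injection $\mathbb{C} \oplus \mathbb{C}[p] \hooklongrightarrow \HHH^{\bullet}(\E,\E) \simeq \mathfrak{T}_{\A,2}^{\bullet}$, forcing equality degree by degree. Your unpacking of the degree-$0$ and degree-$p$ pieces is just a more explicit rendering of the same argument.
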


\begin{proof}
By Theorem \ref{invaunit}, we have a graded injection:

\[ \mathfrak{T}_{\A,2}^{\bullet} \hooklongrightarrow \mathbb{C} \oplus \mathbb{C}[p].\] Let $\E \in _A$ be a unitary object with respect to $\mathfrak{T}_{\A,2}^{\bullet}$. Since $\A$ is a Calabi-Yau category of dimension $p$, we have a graded injection (see for instance the proof of Proposition \ref{prop1}):

\[ \mathbb{C} \oplus \mathbb{C}[p] \hooklongrightarrow \HHH^{\bullet}(\E,\E) \simeq \mathfrak{T}_{\A,2}^{\bullet} .\]

We conclude that there is a graded ring isomorphism : $\mathfrak{T}_{\A,2}^{\bullet} \simeq \mathbb{C} \oplus \mathbb{C}[p]$.
\end{proof}

\begin{exem} \label{exemhomounit} There are numerous situations where corollaries \ref{corrro1} and \ref{corrro2} apply:

\begin{enumerate}
\item Let $X$ be a smooth projective threefold over $\mathbb{C}$ with $K_X \simeq \OO_X$. All line bundles on $X$ are unitary objects. Because numerical and homological equivalence coincide on $X$, the complexified cohomological Chern character gives an injection:

\[ \mathrm{Ch}_X(.) \otimes \mathbb{C} : \mathrm{K}_{num}(X) \otimes \mathbb{C} \hooklongrightarrow \bigoplus_{p \geq 0} H^{p}(X, \Omega^p_X).\]

Furthermore, the Lefschetz $1-1$ Theorem and the Hard Lefschetz Theorem easily imply that the Chern characters of line bundles generate $\bigoplus_{p \geq 0} H^{p}(X, \Omega^p_X)$. We deduce that $\mathrm{cl}(\DB(X)_{unitary})$ generates $K_{num}(X) \otimes \mathbb{C}$. In particular, if $X$ is a strict Calabi-Yau variety (that is $H^{\bullet}(\OO_X) = \mathbb{C} \oplus \mathbb{C}[3]$), then corollary \ref{corrro2} shows that the homological unit associated to any other non-trivial numerical rank function on $\DB(X)$ having a unitary object is necessarily $\mathbb{C} \oplus \mathbb{C}[3]$.\\

\item  Let $X \subset \mathbb{P}^{8}$ be a generic cubic hypersurface. Consider the semi-orthogonal decomposition:
 \[ \DB(X) = \langle \A_X, \OO_{X}, \OO_{X}(1), \ldots, \OO_{X}(5) \rangle, \]
where $\A_X$ is a $3$-dimensional Calabi-Yau category. As explained in example \ref{exem1}, there exists a rank $9$ vector bundle $\E_X$ on $X$ which is $3$-spherical and such that $\E_X(-1)$ and $\E_X(-2)$ are in $\A_X$. From this, we deduced that the homological unit of $\A_X$ with respect to the rank function coming from $\DB(X)$ is $\mathbb{C} \oplus \mathbb{C}[3]$ (see proposition \ref{prop1}).
 
One easily computes that $\mathrm{K}_0(\A_X) = \mathbb{Z}^2$ so that the class of $\E_X(-1)$ and $\E_X(-2)$ generate $\mathrm{K}_0(\A_X)$. In particular, $\mathrm{cl}((\A_X)_{unitary})$ generate $\mathrm{K}_0(\A_X) \otimes \mathbb{C}$. Corollary \ref{corrro2} then shows that the homological unit associated to any other non-trivial rank function on $\A_X$ having a unitary object is necessarily $\mathbb{C} \oplus \mathbb{C}[3]$.\\

\item  Let $X \subset \mathbb{P}(1,1,1,1,1,1,2)$ be a generic double quartic fivefold. Consider the semi-orthogonal decomposition:
\[ \DB(X) = \langle \A_X, \OO_{X}, \OO_{X}(1), \OO_{X}(2), \OO_{X}(3) \rangle, \]
where $\A_X$ is a $3$-dimensional Calabi-Yau category. As explained in example \ref{exem1}, there exists a rank $6$ vector bundle $\F_X$ on $X$ which is $3$-spherical and such that $\F_X(-1)$ and $\F_X(-2)$ are in $\A_X$. From this, we deduced that the homological unit of $\A_X$ with respect to the rank function coming from $\DB(X)$ is $\mathbb{C} \oplus \mathbb{C}[3]$ (see proposition \ref{prop1}).
 
One easily computes that $\mathrm{K}_0(\A_X) = \mathbb{Z}^2$ so that the class of $\F_X(-1)$ and $\F_X(-2)$ generate $\mathrm{K}_0(\A_X)$. In particular, $\mathrm{cl}((\A_X)_{unitary})$ generate $\mathrm{K}_0(\A_X) \otimes \mathbb{C}$. Corollary \ref{corrro2} then shows that the homological unit associated to any other non-trivial rank function on $\A_X$ having a unitary object is necessarily $\mathbb{C} \oplus \mathbb{C}[3]$.\\

\item Let $(\mathrm{Q},W)$ be a quiver without loops and $W$ be a reduced potential for $\mathrm{Q}$. Let $\A_{(\mathrm{Q},W)}$ be the Calabi-Yau-$3$ algebra associated to $(\mathrm{Q},W)$. Let $D_{fd}(\A_{(\mathrm{Q},W)})$ be the derived category of finite dimensional DG $\A_{(\mathrm{Q},W)}$-modules. For $\E \in D_{fd}(\A_{(\mathrm{Q},W)})$, we let $\mathrm{rank}(\E)$ be the rank of $\E$ as a representation of $\mathrm{Q}$. As proved in proposition \ref{quiver}, the homological unit of $D_{fd}(\A_{(\mathrm{Q},W)})$ with respect to $\mathrm{rank}$ is $\mathbb{C} \oplus \mathbb{C}[3]$. 

Let $i$ be a vertex of $Q$ and let $S_i$ be the simple module corresponding to $i$. We recalled in the proof of proposition \ref{quiver} that the $\{S_i \}_{i \in Q}$ are $3$-spherical objects which generate $D_{fd}(\A_{(\mathrm{Q},W)})$. Hence, $\mathrm{cl}\left((D_{fd}(\A_{(\mathrm{Q},W)})_{unitary} \right)$ generate $\mathrm{K}_0(\A_{(\mathrm{Q},W)}) \otimes \mathbb{C}$. Corollary \ref{corrro2} then shows that the homological unit associated to any other non-trivial rank function on $D_{fd}(\A_{(\mathrm{Q},W)})$ having a unitary object is necessarily $\mathbb{C} \oplus \mathbb{C}[3]$.\\
\end{enumerate}

\end{exem}
\end{subsection}

\end{section}

\begin{section}{A Hodge structure on the Hochschild homology of some Calabi-Yau categories of dimension $3$}
\begin{subsection}{Hodge numbers for Calabi-Yau categories of dimension $3$}

In this section, using the homological unit, we define Hodge numbers for Calabi-Yau categories of dimension $3$ and we provide examples of computations of such numbers.

\begin{defi} \label{defihodgenumbers}
Let $\A$ be a smooth proper triangulated category which is Calabi-Yau of dimension $3$. Let $\mathrm{rank}$ be a non-trivial rank function on $\A$ and let $\mathfrak{T}_{\A}^{\bullet}$ be a homological unit for $\A$ with respect to $\mathrm{rank}$. We define the \textit{Hodge numbers} of $\A$ as:
\begin{enumerate}
\item for all $i \in [0, \ldots, 3]$, $h^{i,0}(\A) = \mathfrak{T}_{\A}^{3-i}$,
\item $h^{3,1}(\A) = \dim \HH_{-2}(\A) - h^{2,0}(\A)$,
\item $h^{3,2}(\A) = h^{1,0}(\A)$ and $h^{2,1}(\A) = \dim \HH_{-1}(\A) - h^{1,0}(\A) - h^{3,2}(\A)$,
\item $h^{3,3}(\A) = h^{0,0}(\A)$ and $h^{1,1}(\A) = h^{2,2}(\A) = \dfrac{\dim \HH_0(\A) - h^{0,0}(\A) - h^{3,3}(\A)}{2}$.
\item $h^{p,q}(\A) = h^{q,p}(\A)$ for any $p,q \in [0, \ldots, 3]$.
\end{enumerate}

\end{defi}

In case $\A = \DB(X)$, where $X$ is a smooth complex projective variety of dimension $3$ with $K_X = \OO_X$, one immediately checks (with the Hochschild-Kostant-Rosenberg isomorphism) that the Hodge numbers defined above match with the <<classical>> Hodge numbers. In the general case, it is not immediately clear that all these Hodge numbers are positive (and integral as far as $h^{1,1}$ and $h^{2,2}$ are concerned). We provide an easy criterion to check that all these numbers (except possibly $h^{2,1}$) are non-negative integers. 

\begin{prop} \label{hodgenumbers}
Let $\A$ be a a smooth proper triangulated category Calabi-Yau of dimension $3$, which is the derived category of perfect $DG$-modules over a $DG$-algebra $\C$. Let $\mathrm{rank}$ be a non-trivial rank function on $\A$ and let $\mathfrak{T}_{\A}^{\bullet}$ be a homological unit for $\A$ with respect to $\mathrm{rank}$. Assume that $\A$ is connected (that is $\HH_{-3}(\A) = \mathbb{C}$) and that there exists a unitary object with respect to $\mathfrak{T}_{\A}^{\bullet}$ in $\A$. Then all $h^{p,q}(\A)$ (except possibly $h^{2,1}(\A) = h^{1,2}(\A)$) are non-negative integers.
\end{prop}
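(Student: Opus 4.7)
The plan is to handle each entry of the Hodge diamond separately, using four structural inputs: the embedding $\mathfrak{T}_{\A}^\bullet \hookrightarrow \HH^\bullet(\A)$ from Remark \ref{rem1}; the Calabi-Yau isomorphism $\HH_k(\A) \simeq \HH^{k+3}(\A)$ coming from the Serre functor being $[3]$; Serre duality $\HHH^k(F,F) \simeq \HHH^{3-k}(F,F)^\vee$ applied to the self-Ext ring of a unitary object $F$; and the non-degenerate cup-product pairing on $\HH^\bullet(\A)$ induced by the Calabi-Yau trace on $\HH^6(\A)$.

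First I would pin down the dimensions of $\mathfrak{T}_\A^0$ and $\mathfrak{T}_\A^3$. The algebra $\mathfrak{T}_\A^\bullet$ is unital, so $\mathbb{C} \subseteq \mathfrak{T}_\A^0$, and the chain $\mathfrak{T}_\A^0 \hookrightarrow \HH^0(\A) \simeq \HH_{-3}(\A) = \mathbb{C}$ forces $\mathfrak{T}_\A^0 = \mathbb{C}$. Applying Serre duality to $\HHH^\bullet(F,F) \simeq \mathfrak{T}_\A^\bullet$ yields $\dim \mathfrak{T}_\A^{3-k} = \dim \mathfrak{T}_\A^k$, whence $\dim \mathfrak{T}_\A^3 = 1$ and $\dim \mathfrak{T}_\A^2 = \dim \mathfrak{T}_\A^1$. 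The numbers $h^{i,0}(\A) = \dim \mathfrak{T}_\A^{3-i}$, together with $h^{0,i}(\A)$, $h^{3,3}(\A) = h^{0,0}(\A) = 1$, $h^{3,0}(\A) = h^{0,3}(\A) = 1$, and $h^{3,2}(\A) = h^{1,0}(\A)$ (plus mirror entries), are then immediately non-negative integers. For $h^{3,1}(\A)$, the isomorphism $\HH_{-2}(\A) \simeq \HH^1(\A) \supseteq \mathfrak{T}_\A^1$ gives $\dim \HH_{-2}(\A) \geq \dim \mathfrak{T}_\A^1$, hence $h^{3,1}(\A) = \dim \HH_{-2}(\A) - \dim \mathfrak{T}_\A^1 \geq 0$, and similarly for $h^{1,3}(\A)$.

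The remaining, most delicate entry is $h^{1,1}(\A) = h^{2,2}(\A) = (\dim \HH_0(\A) - 2)/2$, which requires $\dim \HH_0(\A)$ to be even and at least $2$. The inclusion $\mathfrak{T}_\A^3 \hookrightarrow \HH^3(\A) \simeq \HH_0(\A)$ already gives $\dim \HH_0(\A) \geq 1$. For the evenness I would use the Calabi-Yau cup-product pairing on $\HH^\bullet(\A)$: the connectedness assumption, combined with the Calabi-Yau duality for $\A \otimes \A^{op}$, gives $\HH^6(\A) \simeq \HH^0(\A)^\vee = \mathbb{C}$, providing a canonical trace $\mathrm{tr} : \HH^6(\A) \to \mathbb{C}$, and the resulting bilinear form $(a,b) \mapsto \mathrm{tr}(a \cup b)$ on $\HH^3(\A)$ is non-degenerate. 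Graded commutativity of the cup product gives $a \cup b = (-1)^{3 \cdot 3} b \cup a = -b \cup a$ for $a,b \in \HH^3(\A)$, so this form is skew-symmetric, forcing $\dim \HH^3(\A) = \dim \HH_0(\A)$ to be even and hence $\geq 2$. This parity statement is the principal obstacle of the proof: every inequality involving $\mathfrak{T}_\A^\bullet$ is immediate from the definition of the homological unit, whereas the evenness of $\dim \HH_0(\A)$ rests on the structural fact that the cup-trace pairing on the middle Hochschild cohomology of a smooth proper Calabi-Yau category is skew-symmetric in odd dimension.
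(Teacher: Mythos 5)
Your proposal is correct and follows essentially the same route as the paper: the embedding $\mathfrak{T}_{\A}^{\bullet} \hookrightarrow \HH^{\bullet}(\A) \simeq \HH_{\bullet-3}(\A)$ together with connectedness handles $h^{i,0}$, $h^{3,1}$ and forces $h^{0,0}=h^{3,3}=1$, and the evenness of $\dim \HH_0(\A)$ comes from the same structural fact in both arguments, namely that the graded-commutative perfect (Serre-duality/cup-trace) pairing on the Hochschild cohomology of the Calabi-Yau-$6$ enveloping category $\C^{op}\otimes\C$ restricts to a symplectic form on $\HH^{3}(\A)$. The only cosmetic difference is that you phrase this via graded commutativity of the cup product and the sign $(-1)^{3\cdot 3}=-1$, whereas the paper simply asserts that the Serre pairing is graded-commutative and hence symplectic in middle degree.
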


\begin{proof}
We first notice that for all $i \in [0, \ldots, 3]$, the numbers $h^{i,0}(\A)$ are non-negative integers by definition. We also remark that $h^{3,0}(\A) = h^{0,0}(\A) \neq 0$ and that $h^{1,0}(\A) = h^{2,0}(\A)$. Indeed, we know by hypothesis that there exists an unitary object in $\A$ and Serre duality applies to its endomorphism algebra (which contains the identity in degree $0$).

\bigskip

The definition of homological unit implies that there is a graded ring embedding:
\[ \mathfrak{T}_{\A}^{\bullet} \hooklongrightarrow \HH^{\bullet}(\A).\]

Since $\A$ is a Calabi-Yau category of dimension $3$, there is an isomorphism of graded vector spaces $\HH^{\bullet}(\A) \simeq \HH_{\bullet -3}(\A)$. We deduce that there is an embedding of graded vector spaces:

\[ \mathfrak{T}_{\A}^{\bullet} \hooklongrightarrow \HH_{\bullet - 3}(\A).\]
As a consequence, the number $h^{3,1}(\A)$ is a non-negative integer. As noticed earlier,we have $\mathfrak{T}_{\A}^{3} \simeq \left( \mathfrak{T}_{\A}^{0}\right)^* \neq 0$. Moreover, we have $\HH_{-3}(\A) = \mathbb{C}$ by hypothesis (connectivity of $\A$). We find that $\mathbb{C} \simeq \HH_{-3}(\A) \simeq \mathfrak{T}_{\A}^{3}(\A) \simeq \mathfrak{T}_{\A}^{0}$. In particular $h^{3,0}(\A) = h^{0,0}(\A) = 1$.

\bigskip

As mentioned in the statement of proposition \ref{hodgenumbers}, we do not know if $h^{2,1}(\A)$ is always a non-negative integer. We move on to prove that $h^{1,1}(\A) = h^{2,2}(\A)$ are non-negative integers. Since $\A$ is the derived category of perfect $DG$-modules over the $DG$-algebra $\C$, we have an identification:

\[ \HH^{\bullet}(\A)  = \HHH^{\bullet}_{D^{perf}(\C^{op} \otimes \C)}(\Delta_{\C}, \Delta_{\C}),\]
where $\Delta_{\C}$ is the diagonal bimodule over $\C^{op} \otimes \C$. As $\A$ is a Calabi-Yau category of dimension $3$, the category $D^{perf}(\C^{op} \otimes \C)$ is a Calabi-Yau category of dimension $6$. Serre duality then provides a graded-commutative perfect pairing (given by composition of morphisms followed by a trace map):

\[ S_{(\C^{op} \otimes \C)}^{\bullet} :  \HHH^{\bullet}_{D^{perf}(\C^{op} \otimes \C)}(\Delta_{\C}, \Delta_{\C}) \times  \HHH^{6-\bullet}_{D^{perf}(\C^{op} \otimes \C)}(\Delta_{\C}, \Delta_{\C}) \longrightarrow \mathbb{C}.\]
Specializing to the case $\bullet = 3$, we find that $\HHH^{3}_{D^{perf}(\C^{op} \otimes \C)}(\Delta_{\C}, \Delta_{\C}) = \HH^{3}(\A)$ is a symplectic vector space, in particular its dimension is even. We deduce that $\dim \HH_{0}(\A)$ is even. As $\mathbb{C} \simeq \mathfrak{T}_{\A}^{3}(\A)$ embeds in $\HH_{0}(\A)$, the dimension of $\HH_{0}(\A)$ must necessarily be an even integer strictly positive. Since $h^{0,0}(\A) = h^{3,3}(\A) = 1$, we have proved that the numbers $h^{1,1}(\A)$ and $h^{2,2}(\A)$ defined by: 
\[h^{1,1}(\A) = h^{2,2}(\A) =\dfrac{\dim \HH_0(\A) - h^{0,0}(\A) - h^{3,3}(\A)}{2}\] are integral and non-negative.
\end{proof}

Using the results of the previous sections, one can prove the invariance (with respect to the rank function) of the afore-mentioned Hodge numbers in many situations. Namely we have the:

\begin{theo} \label{invahodgenumbers}
\begin{enumerate}
\item Let $\A$ be a smooth proper triangulated category which is Calabi-Yau of dimension $3$. Let $\mathrm{rank}_1, \mathrm{rank}_2$ be a non-trivial rank functions on $\A$ and let $\mathfrak{T}_{\A,1}^{\bullet}, \mathfrak{T}_{\A,2}^{\bullet}$ be homological units for $\A$ with respect to $\mathrm{rank}_1$, $\mathrm{rank}_2$. Let $\mathrm{cl} : \A \longrightarrow \mathrm{K}_{0}(\A)$ be the class map and denote by $\A_{unitary}^{(i)}$ the set of objects in $\A$ which are unitary with respect to $\mathfrak{T}_{\A,i}^{\bullet}$. Finally, for all $p,q \in [0,\ldots,3]$, we denote by $h^{p,q}_i(\A)$ the Hodge numbers of $\A$ associated to $\mathfrak{T}_{\A,i}^{\bullet}$ as in definition \ref{defihodgenumbers}. We have the following:

\begin{enumerate}
\item  If both $\mathrm{cl}(\A_{unitary}^{(1)})$ and $\mathrm{cl}(\A_{unitary}^{(2)})$ generate $\mathrm{K}_{0}(\A) \otimes \mathbb{C}$, then we have:

\[ h^{p,q}_1(\A) = h^{p,q}_2(\A),\]
for all $p,q \in [0,\ldots,3]$.

\item If $\mathfrak{T}_{\A,1}^{\bullet} = \mathbb{C} \oplus \mathbb{C}[3]$, $\mathrm{cl}(\A_{unitary}^{(1)})$ generates $\mathrm{K}_{0}(\A) \otimes \mathbb{C}$ and there exists a unitary object in $\A$ with respect to $\mathfrak{T}_{\A,2}^{\bullet}$, then we have:

\[ h^{p,q}_1(\A) = h^{p,q}_2(\A),\]
for all $p,q \in [0,\ldots,3]$.
\end{enumerate}

\item The same conclusions as above hold if we only assume that $\mathrm{cl}(\A_{unitary}^{(1)})$ and $\mathrm{cl}(\A_{unitary}^{(2)})$ generate $\mathrm{K}_{num}(\A) \otimes \mathbb{C}$ (resp.  $\mathrm{cl}(\A_{unitary}^{(1)})$ generates $\mathrm{K}_{num}(\A) \otimes \mathbb{C}$) provided that $\mathrm{rank}_1$ and $\mathrm{rank}_2$ are numerical rank functions on $\A$.
\end{enumerate}
\end{theo}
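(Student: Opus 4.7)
The strategy is to reduce the statement to the invariance of the graded homological unit, which has already been established in the previous subsection. Inspecting the five items of Definition \ref{defihodgenumbers}, every Hodge number $h^{p,q}(\A)$ is expressed in terms of two kinds of data: the graded dimensions $\dim \mathfrak{T}_{\A}^{i}$ for $i \in \{0,1,2,3\}$, and the Hochschild homology dimensions $\dim \HH_{-2}(\A)$, $\dim \HH_{-1}(\A)$, $\dim \HH_{0}(\A)$. The latter are intrinsic invariants of the DG category $\A$ and involve no rank function whatsoever. Hence to prove that $h^{p,q}_1(\A) = h^{p,q}_2(\A)$ for all $p,q \in [0,\ldots,3]$, it suffices to establish $\dim \mathfrak{T}_{\A,1}^{i} = \dim \mathfrak{T}_{\A,2}^{i}$ for $i = 0,1,2,3$.

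For part 1(a), the assumption is precisely the one required by Corollary \ref{corrro1}, which directly produces a graded ring isomorphism $\mathfrak{T}_{\A,1}^{\bullet} \simeq \mathfrak{T}_{\A,2}^{\bullet}$; in particular the graded dimensions agree, and the reduction above closes the argument. For part 1(b), I would instead invoke Corollary \ref{corrro2} with $p=3$: the combination of $\mathfrak{T}_{\A,1}^{\bullet} = \mathbb{C} \oplus \mathbb{C}[3]$, of the generation of $\mathrm{K}_0(\A) \otimes \mathbb{C}$ by $\mathrm{cl}(\A_{unitary}^{(1)})$, and of the existence of a unitary object with respect to $\mathfrak{T}_{\A,2}^{\bullet}$ is exactly the input of that corollary, which yields $\mathfrak{T}_{\A,2}^{\bullet} \simeq \mathbb{C} \oplus \mathbb{C}[3] = \mathfrak{T}_{\A,1}^{\bullet}$. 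Again the graded dimensions coincide, and we conclude as before. Part 2 follows from the same two arguments, replacing each appeal to the first item of Corollaries \ref{corrro1} and \ref{corrro2} by their numerical analog (the second item), whose hypothesis is exactly the generation of $\mathrm{K}_{num}(\A) \otimes \mathbb{C}$ by the relevant classes.

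The substantive content of the theorem has thus already been absorbed into the invariance results of the previous subsection, and no genuine obstacle remains. The only point that has to be checked by inspection, and which justifies the whole reduction, is that Definition \ref{defihodgenumbers} sees the homological unit solely through its graded dimensions, which is transparent from items (1)--(5); everything else is provided by Corollaries \ref{corrro1} and \ref{corrro2}.
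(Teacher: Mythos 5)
Your proposal is correct and follows exactly the paper's own route: the paper's proof consists of the single observation that the theorem follows from Corollaries \ref{corrro1} and \ref{corrro2} in light of Definition \ref{defihodgenumbers}. Your write-up merely makes explicit the (correct) reduction that the Hodge numbers depend only on the graded dimensions of the homological unit together with the rank-independent Hochschild homology dimensions.
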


\begin{proof}
In light of the definition of Hodge numbers given in \ref{defihodgenumbers}, the statements above are easy consequences of corollaries \ref{corrro1} and \ref{corrro2}.
\end{proof}

\begin{exem} \label{exemhodgenumbers}
\begin{enumerate}
\item  Let $X \subset \mathbb{P}^{8}$ be a generic cubic hypersurface. Consider the semi-orthogonal decomposition:
 \[ \DB(X) = \langle \A_X, \OO_{X}, \OO_{X}(1), \ldots, \OO_{X}(5) \rangle, \]
where $\A_X$ is a $3$-dimensional Calabi-Yau category. As explained in example \ref{exemhomounit}, the homological unit of $\A_X$ with respect to the rank function coming from $\DB(X)$ is $\mathbb{C} \oplus \mathbb{C}[3]$. The Hochschild homology numbers for $X$ are (see \cite{maniliev2}, section 3):

\begin{itemize}
\item $\mathrm{hh}_0(X) = 8$,
\item $\mathrm{hh}_1(X) = \mathrm{hh}_{-1}(X) = 84$,
\item $\mathrm{hh}_2(X) = \mathrm{hh}_{-2}(X) = 1$
\end{itemize}
 
The direct sum decomposition $\HH_{\bullet}(X) = \HH_{\bullet}(\A_X) \oplus \mathbb{C}^6$ finally implies that the Hodge diamond of $\A_X$ with respect to the rank function coming from $\DB(X)$ is:

\begin{center}
\begin{tabular}{ccccccc} 
& & & 1 & & & \\
& & 0& &0 & &  \\
& 0& &0 & &0 & \\
1& &84 & &84 & &1 \\
& 0& &0 & &0 & \\
& & 0& &0 & &  \\
& & & 1 & & & \\
\end{tabular}
\end{center}
As mentioned in example \ref{exemhomounit}, we know that $\mathrm{cl}((\A_X)_{unitary})$ generate $\mathrm{K}_0(\A_X) \otimes \mathbb{C}$. As a consequence, Theorem \ref{invahodgenumbers} guarantees that the Hodge numbers of $\A_X$ defined for any other non-trivial rank function on $\A_X$ having a unitary object are equal to the numbers appearing in the above diamond.

\item Let $X \subset \mathbb{P}(1,1,1,1,1,1,2)$ be a generic double quartic fivefold. Consider the semi-orthogonal decomposition:
\[ \DB(X) = \langle \A_X, \OO_{X}, \OO_{X}(1), \OO_{X}(2), \OO_{X}(3) \rangle, \]
where $\A_X$ is a $3$-dimensional Calabi-Yau category. As explained in example \ref{exemhomounit}, the homological unit of $\A_X$ with respect to the rank function coming from $\DB(X)$ is $\mathbb{C} \oplus \mathbb{C}[3]$. The Hochschild homology numbers for $X$ are (see \cite{maniliev2}, section 3):

\begin{itemize}
\item $\mathrm{hh}_0(X) = 6$,
\item $\mathrm{hh}_1(X) = \mathrm{hh}_{-1}(X) = 90$,
\item $\mathrm{hh}_2(X) = \mathrm{hh}_{-2}(X) = 1$
\end{itemize}
 
The direct sum decomposition $\HH_{\bullet}(X) = \HH_{\bullet}(\A_X) \oplus \mathbb{C}^6$ finally implies that the Hodge diamond of $\A_X$ with respect to the rank function coming from $\DB(X)$ is:

\begin{center}
\begin{tabular}{ccccccc} 
& & & 1 & & & \\
& & 0& &0 & &  \\
& 0& &0 & &0 & \\
1& &90 & &90 & &1 \\
& 0& &0 & &0 & \\
& & 0& &0 & &  \\
& & & 1 & & & \\
\end{tabular}
\end{center}
As mentioned in example \ref{exemhomounit}, we know that $\mathrm{cl}((\A_X)_{unitary})$ generate $\mathrm{K}_0(\A_X) \otimes \mathbb{C}$. As a consequence, Theorem \ref{invahodgenumbers} guarantees that the Hodge numbers of $\A_X$ defined for any other non-trivial rank function on $\A_X$ having a unitary object are equal to the numbers appearing in the above diamond.
\end{enumerate}
\end{exem}

The cubic sevenfold and the double quartic fivefolds are examples of Fano manifolds of Calabi-Yau type that were introduced in \cite{maniliev2}. As far as complete intersections in weighted projective spaces are concerned, there is a another example of Fano manifolds of Calabi-Yau type exhibited in \cite{maniliev2} : the (transverse) complete intersection of a smooth cubic and a smooth quadric in $\mathbb{P}^7$. Let $X$ such a complete intersection. It is known (see \cite{kuz2}) that there is a semi-orthogonal decomposition:

\[ \DB(X) = \langle \A_X, \OO_X, \OO_X(1), \OO_X(2), \mathcal{S}(2) \rangle, \]
where $\A_X$ is Calabi-Yau of dimension $3$ and $\mathcal{S}$ is the restriction of one of the Spinor bundles from $\mathbb{Q}^6$. It is easily computed that $\mathrm{K}_0(\A_X) = \mathbb{Z}^2$.

\begin{quest}
 Can we find a spherical bundle $\E$ on $X$ such that $\E(-1) \in \A_X$ and $\E(-2) \in \A_X$?
\end{quest}
A positive answer to this question would show that the homological unit of $\A_X$ with respect to the rank function coming from $\DB(X)$ is $\mathbb{C} \oplus \mathbb{C}[3]$. Hence, the Hodge diamond of $\A_X$ (with respect to the rank function on $\A_X$ coming from $\DB(X)$) would be:

\begin{center}
\begin{tabular}{ccccccc} 
& & & 1 & & & \\
& & 0& &0 & &  \\
& 0& &0 & &0 & \\
1& &83 & &83 & &1 \\
& 0& &0 & &0 & \\
& & 0& &0 & &  \\
& & & 1 & & & \\
\end{tabular}
\end{center}
Theorem \ref{invahodgenumbers} would guarantee that Hodge numbers of $\A_X$ with respect to any other rank function on $\A_X$ having a unitary object are equal to the above numbers.

\bigskip
\begin{rem}
It would also certainly be desirable to compute the Hodge numbers of the corresponding categories in the case of quiver with potentials. We already know that their homological units are $\mathbb{C} \oplus \mathbb{C}[3]$ and that $\mathrm{cl}(\A_{unitary})$ generates $\mathrm{K}_{0}(\A) \otimes \mathbb{C}$ (see proposition \ref{quiver}). As a consequence of Theorem \ref{invahodgenumbers}, the Hodge numbers of the $3$-Calabi-Yau categories coming from quiver with potentials are independent of the rank function.
\end{rem}

\end{subsection}

\begin{subsection}{A Hodge structure}
In this section we assume that $\A$ is a geometric Calabi-Yau of dimension $3$ : there exists is a smooth projective variety over $\mathbb{C}$, say $X$, and a semi-orthogonal decomposition:
\begin{equation*}
\DB(X) = \langle \A, \leftexp{\perp}{\A} \rangle
\end{equation*}
such that $\A$ is Calabi-Yau of dimension $3$. By the Hochschild-Kostant-Rosenberg isomorphism, we have an isomorphism:
\begin{equation*}
\begin{split}
\tau_{\mathrm{HH}_{\bullet}} : &\ \ \mathrm{HH}_{\bullet}(\DB(X)) \simeq \bigoplus_{p-q = \bullet}H^{q}(X, \Omega^p_X), \\
\end{split}
\end{equation*}
Furthermore, by Hodge symmetry, the complex conjugation induces an isomorphism:
\begin{equation*}
\mathrm{HS}_{X} : H^p(X,\Omega^q_X) \simeq H^{q}(X, \Omega^p_X).
\end{equation*}
Composing complex conjugation with the inverse of the map $\tau_{\mathrm{HH}_{\bullet}}$, we find an involution:

\begin{equation*}
\mathrm{c}_X : \mathrm{HH}_{\bullet}(\DB(X)) \simeq \mathrm{HH}_{-\bullet}(\DB(X))
\end{equation*}
From now, we make the following hypothesis:

\begin{hypo} \label{hypohodge}
The map $\mathrm{c}_X$ stabilizes the Hochschild homology of $\leftexp{\perp}{\A}$ in the decomposition:
\begin{equation*}
\HH_{\bullet}(\DB(X)) = \HH_{\bullet}(\A) \oplus \HH_{\bullet}(\leftexp{\perp}{\A})
\end{equation*}
where $\leftexp{\perp}{\A}$ is the left-orthogonal to $\A$ in $\DB(X)$.
\end{hypo}

This hypothesis is satisfied in many situations which we shall describe. Let $Y_1, \ldots, Y_k$ be smooth projective varieties and $\F_1, \ldots, \F_k$ be objects in $\DB(Y_1 \times X), \ldots, \DB(Y_k \times X)$. We denote by $p_k$ and $q_k$ the natural projections in the diagram:

\begin{equation*}
\xymatrix{  & Y_k \ar[ldd]_{q_k} \times X \ar[rdd]^{p_k}&  \\
& &  \\
Y_k & & X}
\end{equation*}
Assume that the functors:
\begin{equation*}
\Phi_k(?) = (p_k)_*(q_k^*(?) \otimes \F_k) : \DB(Y_k) \longrightarrow \DB(X)
\end{equation*}
are fully faithful and that there is a semi-orthogonal decomposition:

\begin{equation*}
\DB(X) = \langle \A, \Phi_1(\DB(Y_1)), \ldots, \Phi_k(\DB(Y_k)) \rangle.
\end{equation*}
Then hypothesis \ref{hypohodge} holds in that case. In particular, since we have an equality:
\begin{equation*}
\HH_{\bullet}(\A) = \HH_{\bullet}(X)/\HH_{\bullet}(\leftexp{\perp}{\A}),
\end{equation*}
the map $\mathrm{c}_X$ descends to an involution:

\begin{equation*}
\cc_{\A} : \HH_{\bullet}(\A) \simeq \HH_{- \bullet}(\A).
\end{equation*}
Since $\A$ is a semi-orthogonal component of $\DB(X)$ with $X$ smooth projective, we can write $\A$ as the derived category of $DG$-modules over some $DG$-algebra, say $\C$. In particular, we have an identification:

\[ \HH^{\bullet}(\A)  = \HHH^{\bullet}_{D^{perf}(\C^{op} \otimes \C)}(\Delta_{\C}, \Delta_{\C}),\]
where $\Delta_{\C}$ is the diagonal bimodule over $\C^{op} \otimes \C$. Assume that $\A$ is a Calabi-Yau category of dimension $3$, the category $D^{perf}(\C^{op} \otimes \C)$ is then a Calabi-Yau category of dimension $6$. Serre duality then provides a graded-commutative perfect pairing (given by composition of morphisms followed by a trace map):

\[ S_{(\C^{op} \otimes \C)}^{\bullet} :  \HHH^{\bullet}_{D^{perf}(\C^{op} \otimes \C)}(\Delta_{\C}, \Delta_{\C}) \times  \HHH^{6-\bullet}_{D^{perf}(\C^{op} \otimes \C)}(\Delta_{\C}, \Delta_{\C}) \longrightarrow \mathbb{C}.\]
Specializing to the case $\bullet = 3$, we find that $\HHH^{3}_{D^{perf}(\C^{op} \otimes \C)}(\Delta_{\C}, \Delta_{\C}) = \HH^{3}(\A)$ is a symplectic vector space with symplectic form $S_{(\C^{op} \otimes \C)}^{3}$. As $\A$ is Calabi-Yau of dimension $3$, we have an isomorphism $\HH_{0}(\A) \simeq \HH^{3}(\A)$. Hence we can lift $S_{(\C^{op} \otimes \C)}^{3}$ to a symplectic form on $\HH_{0}(\A)$, which we denote by $\omega_{\HH_0(\A)}$.

\begin{defi} \label{defihodgespaces}
Let $X$ be smooth projective variety and let $\A$ semi-orthogonal component of $\DB(X)$ which is Calabi-Yau of dimension $3$ and connected (that is $\HH_{-3}(\A) = \mathbb{C}$). Let $\mathrm{rank}$ be a non-trivial rank function on $\A$ and assume that a homological unit for $\A$ with respect to $\mathrm{rank}$ is $\mathbb{C} \oplus \mathbb{C}[3]$. Assume furthermore that hypothesis \ref{hypohodge} is satisfied. We define the \textit{Hodge spaces} of $\A$ as:
\begin{enumerate}
\item $H^{3,0}(\A) = \HH_{-3}(\A) = \mathbb{C}$, $H^{0,0}(\A) = \mathbb{C} \subset \HH_{0}(\A)$, $H^{1,0}(\A) = H^{2,0}(\A) = 0$.
\item $H^{3,1}(\A) = \HH_{-2}(\A)$, $H^{3,2}(\A) = 0$ and $H^{2,1}(\A) = \HH_{-1}(\A)$.
\item We choose $V_1$ a maximal isotropic subspace of $\HH_{0}(\A)$ (for $\omega_{\HH_{0}(\A)}$) containing $H^{0,0}(\A)$ and $V_2$ a maximal isotropic subspace in $\HH_{0}(\A)$ which is complementary to $V_1$. We let $H^{1,1}(\A)$ be a complementary subspace of $H^{0,0}(\A)$ in $V_1$. We let $H^{3,3}(\A)$ be a line in $V_2$ and $H^{2,2}(\A)$ be a complementary subspace of $H^{3,3}(\A)$ in $V_2$.
\item $H^{p,q}(\A) = \cc_{\A}(H^{q,p}(\A))$ for any $(p,q) \in [0, \ldots, 3]$ such that $p<q$.
\end{enumerate}

\end{defi}

\begin{rem}
\begin{enumerate}
\item By definition of homological units, we have a graded embedding $\mathbb{C} \oplus \mathbb{C}[3] \hooklongrightarrow \HH^{\bullet}(\A)$. The category $\A$ is Calabi-Yau of dimension $3$, so that there is a graded embedding:
\begin{equation*}
\mathbb{C} \oplus \mathbb{C}[3] \hooklongrightarrow \HH_{\bullet-3}(\A).
\end{equation*}
This accounts for the definition of $H^{0,0}(\A)$ and its embedding in $\HH_{0}(\A)$.

\item Since $H^{0,0}(\A)$ is a line in the symplectic vector space $(\HH_{0}(\A), \omega_{\HH_{0}(\A)})$, it is automatically an isotropic subspace of $\HH_{0}(\A)$. The definition of $V_1$ and $H^{1,1}(\A)$ is accordingly meaningful.

\item The definition of $H^{3,3}(\A)$ and $H^{2,2}(\A)$ looks rather arbitrary, but it seems difficult to do better in the absence of the a reasonable Lefschetz operator on $\HH_{0}(\A)$ (which we are not able to provide). Note that for $\A$ the Calabi-Yau $3$ category inside the derived category of the cubic fourfold or the double quartic fivefold, this potential Leschetz operator is probably to be defined as $0$. Indeed, we know $h^{1,1}(\A) = 0$ in both cases.

\item It doesn't seem impossible to extend the definition of these Hodge spaces beyond the case where the homological unit with respect to the chosen rank function is $\mathbb{C} \oplus \mathbb{C}[3]$. One would naturally define $H^{i,0}(\A) = \mathfrak{T}_{\A,3-i}$, for $i \in [0, \ldots, 3]$. The definition of $H^{1,1}(\A)$, $H^{2,2}(\A)$, $H^{3,3}(\A)$ is carried out exactly as in definition \ref{defihodgespaces}. On the other hand, the definition of $H^{3,1}(\A)$, $H^{3,2}(\A)$ and $H^{2,1}(\A)$ looks less obvious, but could probably be found out in many special cases.
\end{enumerate}
\end{rem}

\begin{prop}
Let $X$ be smooth projective variety and let $\A$ semi-orthogonal component of $\DB(X)$ which is Calabi-Yau of dimension $3$ and connected (that is $\HH_{-3}(\A) = \mathbb{C}$). Let $\mathrm{rank}$ be a non-trivial rank function on $\A$ and assume that a homological unit for $\A$ with respect to $\mathrm{rank}$ is $\mathbb{C} \oplus \mathbb{C}[3]$. Assume furthermore that hypothesis \ref{hypohodge} is satisfied. Consider the Hodge spaces of $\A$ as in definition \ref{defihodgespaces}. Then we have a graded decomposition:
\begin{equation*}
\HH_{\bullet}(\A) = \bigoplus_{p-q = \bullet} H^{p,q}(\A)
\end{equation*}
and the direct sum $\ds \bigoplus_{p,q \geq 0} H^{p,q}(\A)$ is a Hodge structure on $\HH_{\bullet}(\A)$. If $\mathrm{cl}(\A_{unitary})$ generates $\mathrm{K}_{0}(\A) \otimes \mathbb{C}$ (resp.  $\mathrm{K}_{num}(\A) \otimes \mathbb{C}$ provided that $\mathrm{rank}$ is a numerical rank function) , then the dimensions of the Hodge spaces defined for any other non-trivial rank function (resp. non-trivial numerical rank function) having a unitary object on $\A$ are equal to the $h^{p,q}(\A) = \dim H^{p,q}(\A)$.
\end{prop}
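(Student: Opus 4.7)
The plan is to verify the graded decomposition of $\HH_\bullet(\A)$ one degree at a time, check the Hodge symmetry $H^{p,q}(\A) = \cc_\A(H^{q,p}(\A))$ on all bidegrees, and then deduce the invariance of dimensions from Corollary \ref{corrro2}.

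First I would verify $\HH_\bullet(\A) = \bigoplus_{q-p=\bullet} H^{p,q}(\A)$ degree by degree. For $|\bullet|\in\{1,2,3\}$ exactly one summand on the right is prescribed nonzero by definition \ref{defihodgespaces}, and it is identified directly with $\HH_{\pm\bullet}(\A)$; the mirrored degree is handled by applying the involution $\cc_\A:\HH_\bullet(\A)\simeq\HH_{-\bullet}(\A)$ that hypothesis \ref{hypohodge} lets us descend from $\cc_X$. For $\bullet=0$ I would appeal to the symplectic form $\omega_{\HH_0(\A)}$, transported from the Serre duality pairing on $\HHH^3_{D^{perf}(\C^{op}\otimes\C)}(\Delta_\C,\Delta_\C)$ via the Calabi-Yau identification $\HH_0(\A)\simeq\HH^3(\A)$. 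Because $H^{0,0}(\A)$ is a line it is automatically isotropic, so the recipe in definition \ref{defihodgespaces} -- extend to a maximal isotropic $V_1\supset H^{0,0}(\A)$, pick a complementary maximal isotropic $V_2$, then split each -- yields $\HH_0(\A) = H^{0,0}(\A)\oplus H^{1,1}(\A)\oplus H^{2,2}(\A)\oplus H^{3,3}(\A)$.

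The Hodge condition $\cc_\A(H^{p,q}(\A)) = H^{q,p}(\A)$ is tautological for $p\neq q$ by construction. For $p=q$ one needs $\cc_\A$ to stabilize each diagonal piece, which I would arrange by making the auxiliary choices $V_1,V_2$ and the lines $H^{0,0}(\A), H^{3,3}(\A)$ compatible with the real structure $\cc_\A$. This is possible because $\cc_\A$ is an involution on $\HH_0(\A)$ preserving $\omega_{\HH_0(\A)}$ (both built from Hodge-symmetric and Serre-duality data inherited from $X$), hence preserves the Lagrangian Grassmannian and admits $\cc_\A$-stable choices of $V_1$ containing the $\cc_\A$-stable line $H^{0,0}(\A)$ (which follows from the fact that, by the remark after definition \ref{defihodgespaces}, $H^{0,0}(\A)$ comes from the homological unit, whose embedding into $\HH_0(\A)$ is canonical up to the real structure).

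Finally, for the dimensional invariance I would invoke Corollary \ref{corrro2}: under the hypothesis that $\mathrm{cl}(\A_{unitary})$ generates $\mathrm{K}_0(\A)\otimes\mathbb{C}$ (resp.\ $\mathrm{K}_{num}(\A)\otimes\mathbb{C}$ in the numerical case), any other non-trivial rank function on $\A$ admitting a unitary object must carry homological unit isomorphic to $\mathbb{C}\oplus\mathbb{C}[3]$. Since every $h^{p,q}(\A)$ of definition \ref{defihodgenumbers} depends only on the graded dimensions of the homological unit and of $\HH_\bullet(\A)$ -- both intrinsic to $\A$ -- and since $\dim H^{p,q}(\A)$ equals $h^{p,q}(\A)$ by construction, the Hodge space dimensions are independent of the chosen rank function. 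The main obstacle I anticipate is the compatibility of $\cc_\A$ with $\omega_{\HH_0(\A)}$ needed to make the isotropic-subspace choices $\cc_\A$-stable, which requires carefully tracking the Hochschild-Kostant-Rosenberg isomorphism through the construction of both the real structure and the Serre-duality pairing.
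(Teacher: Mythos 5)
Your argument follows the paper's own proof, which simply observes that the graded decomposition and the Hodge symmetry are immediate from Definition \ref{defihodgespaces} together with the involution $\cc_{\A}$ (which sends $H^{p,q}(\A)$ to $H^{q,p}(\A)$ by construction), and derives the dimension statement from Theorem \ref{invahodgenumbers}, i.e.\ ultimately from Corollary \ref{corrro2}. The one place you go beyond the paper is in flagging the $\cc_{\A}$-stability of the diagonal pieces $H^{p,p}(\A)$ --- a point the paper's proof passes over in silence --- and your proposed remedy (choosing $V_1$, $V_2$ and the lines $H^{0,0}(\A)$, $H^{3,3}(\A)$ compatibly with the real structure) is reasonable but, like the paper, you do not actually verify that $\cc_{\A}$ is compatible with $\omega_{\HH_0(\A)}$ nor that the (non-canonically embedded) line $H^{0,0}(\A)\subset\HH_0(\A)$ can be taken $\cc_{\A}$-stable.
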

\begin{proof}
The graded decomposition and the fact that $\ds \bigoplus_{p,q \geq 0} H^{p,q}(\A)$ is a Hodge structure on $\HH_{\bullet}(\A)$ follow immediately from definition \ref{defihodgespaces} and the existence of the involution $c_{\A} : \HH_{\bullet}(\A) \simeq \HH_{- \bullet}(\A)$ which sends $H^{p,q}(\A)$ on $H^{q,p}(\A)$ (by definition). The second part of the proposition follows from Theorem \ref{invahodgenumbers}.
\end{proof}

\end{subsection}
\begin{subsection}{Toward Homological Mirror Symmetry for the cubic sevenfold and the double quartic fivefold}

\begin{subsubsection}{Cubic sevenfold}

Let $T = E \times E \times E$ be the triple product of an elliptic curve $E$, given by the equation $\{Z_1^3+Z_2^3 + Z_3^3 = 0 \} \subset \mathbb{P}^2 $ and let $\mathbb{Z}_3 \times \mathbb{Z}_3$ acts on $T$ as follows:

\begin{equation*}
\begin{split}
(1,0) . (z_1,z_2,z_3,z_4,z_5,z_6,z_7,z_8,z_9) & = (\alpha.z_1, z_2,z_3,\alpha^2 .z_4,z_5,z_6,z_7,z_8,z_9) \\
(0,1) . (z_1,z_2,z_3,z_4,z_5,z_6,z_7,z_8,z_9) & = (\alpha.z_1,z_2,z_3, z_4,z_5,z_6, \alpha^2.z_7,z_8,z_9),
\end{split}
\end{equation*} 
where $\alpha$ is a cubic root of unity. The quotient $T/\mathbb{Z}_3 \times \mathbb{Z}_3$ has crepant resolution which is a Calabi-Yau threefold. We denote it by $Z_1$. The Hodge diamond of $Z_1$ is:

\begin{center}
\begin{tabular}{ccccccc} 
& & & 1 & & & \\
& & 0& &0 & &  \\
& 0& &84 & &0 & \\
1& &0 & &0 & &1 \\
& 0& &84 & &0 & \\
& & 0& &0 & &  \\
& & & 1 & & & \\
\end{tabular}
\end{center}

As explained in \cite{CDP, sch, BBVW}, the mirror of $Z_1$ ought to be a Landau-Ginzburg model related to a smooth cubic sevenfold. Let $X_1 \subset \mathbb{P}^8$ be a smooth cubic sevenfold. We have a semi-orthogonal decomposition:

\begin{equation*}
\DB(X_1) = \langle \A_1, \OO_{X_1}, \ldots, \OO_{X_1}(5) \rangle,
\end{equation*} 
where $\A_{1}$ is a Calabi-Yau category of dimension $3$. It follows from \cite{orlov1} that $\A_1$ is the homotopy category of the $DG$-category of graded matrix factorizations of the equation of $X_1$. Therefore, the category $\A_1$ can be interpreted as a Landau-Ginzburg model for the cubic sevenfold $X_1$. We found out in example \ref{exemhodgenumbers} that the Hodge diamond of $\A_1$ is:

\begin{center}
\begin{tabular}{ccccccc} 
& & & 1 & & & \\
& & 0& &0 & &  \\
& 0& &0 & &0 & \\
1& &84 & &84 & &1 \\
& 0& &0 & &0 & \\
& & 0& &0 & &  \\
& & & 1 & & & \\
\end{tabular}
\end{center}
One observes that both diamonds are obtained from each other by a $\pi/2$-rotation. This is certainly a favorable presage as far as mirror symmetry is concerned. 

\bigskip

We have shown in example \ref{exem1} that there exists a $3$-spherical bundle $\E_{X_1}$ on $X_1$ such that $\E_{X_1}(-1)$ and $\E_{X_1}(-2)$ are in $\A_1$ and that the Chern characters of these two bundles generate $\HH_{0}(\A_1)$. The following questions naturally come to mind:

\begin{quest} \label{questcubic}
\begin{enumerate}
\item Do the objectis $\E_{X_1}(-1)$ and $\E_{X_1}(-2)$ split-generate the category $\A_1$?

\item For $n \geq 0$, denote by $T_{\E_{X_1}(-2)}^{n}$ the $n$-th composition with itself of the spherical twist along $\E_{X_1}(-2)$. Can we reconstruct a smooth elliptic curve, say $E$, and an action of $\mathbb{Z}_3 \times \mathbb{Z}_3$ on $E \times E \times E$ from the ring $\ds \bigoplus_{n \geq 0} \mathrm{Hom}(\E_{X_1}(-1), T_{\E_{X_1}(-2)}^{n}(\E_{X_1}(-1)))$?

\item Is there a smooth cubic seven fold $X_1$ such that the Fukaya category of $Z_1$ is equivalent to the category of $\A_{\infty}$-modules over the $\A_{\infty}$-algebra $\mathrm{RHom}(\E_{X_1}(-1) \oplus \E_{X_1}(-2), \E_{X_1}(-1) \oplus \E_{X_1}(-2))$?
\end{enumerate}
\end{quest}

The first equation is quite natural and has a positive answer in the context of Fukaya categories (see Theorem $1.1$ of \cite{abouzaid}) or Lemma $9.2$ in \cite{Seidel}). Unfortunately, the analogues of such results are not known in algebraic geometry. We state as a question:

\begin{quest} \label{questgenerate}
Let $X$ be a smooth projective variety and $\A$ be a semi-orthogonal component of $\DB(X)$ which is Calabi-Yau of dimension $p \geq 0$. Let $\E_1, \ldots, \E_k$ be $p$-spherical objects in $\A$ whose ranks with respect to the rank function coming from $X$ are non-zero. Assume that the Chern characters of $\E_1, \ldots, \E_k$ generate $\HH_{0}(\A)$. Is it true that the object $\E_1, \ldots, \E_k$ split generate $\A$?
\end{quest}

The assumption that $\A$ is Calabi-Yau can not be withdrawn, owing to the existence of phantom categories (see \cite{orgor} for instance). Nevertheless, the Homological Mirror Symmetry conjecture and the truth of the statement corresponding to question \ref{questgenerate} in the context of Fukaya categories lead us to believe that it should have a positive answer.

\bigskip

\noindent The second item of question \ref{questcubic} is inspired by known proofs of Homological Mirror Symmetry for elliptic curves (see \cite{zaslowpol, zaslow}, see also \cite{poli2018}). Provided that the answer to the first item of question \ref{questcubic} is positive, the third item of question \ref{questcubic} is just a formulation of the Homological Mirror Symmetry conjecture for the cubic sevenfold and the rigid Calabi-Yau threefold $Z_1$.

\begin{rem}
Let $T = E \times E \times E$ be the triple product of an elliptic curve $E$, given by the equation $\{Z_1^3+Z_2^3 + Z_3^3 = 0 \} \subset \mathbb{P}^2 $ and let $\mathbb{Z}_3$ acts on $T$ as follows:
\begin{equation*}
\begin{split}
(1) . (z_1,z_2,z_3,z_4,z_5,z_6,z_7,z_8,z_9) & = (\alpha.z_1, z_2,z_3,\alpha .z_4,z_5,z_6,\alpha.z_7,z_8,z_9) 
\end{split}
\end{equation*} 
where $\alpha$ is a cubic root of unity. The quotient $T/\mathbb{Z}_3$ has crepant resolution which is a Calabi-Yau threefold. We denote it by $Z_2$. The Hodge diamond of $Z_2$ is:

\begin{center}
\begin{tabular}{ccccccc} 
& & & 1 & & & \\
& & 0& &0 & &  \\
& 0& &36 & &0 & \\
1& &0 & &0 & &1 \\
& 0& &36 & &0 & \\
& & 0& &0 & &  \\
& & & 1 & & & \\
\end{tabular}
\end{center}

We consider the cubic sevenfolds having equations of type:
\[X_{\underline{b}} = \{ -z_1z_2z_3 - z_4z_5z_6-z_7z_8z_9 + \sum_{i=1}^{9} b_iz_i^3 +  \sum_{\substack{i \in [1,2,3] \\ j \in [4,5,6] \\ k \in [7,8,9]}} b_{ijk}z_iz_jz_k = 0 \}, \]
where the $\underline{b} = (b_i, b_{ijk})$ is a vector of complex numbers. Let $\mathbb{Z}_3$ acts on $X_{\underline{b}}$ by:

\begin{equation*}
\begin{split}
(1) . (z_1,z_2,z_3,z_4,z_5,z_6,z_7,z_8,z_9) & = (\alpha.z_1, \alpha.z_2,\alpha.z_3,\alpha^2 .z_4,\alpha^2.z_5,\alpha^2.z_6,z_7,z_8,z_9) 
\end{split}
\end{equation*} 
The exists a semi-orthogonal decomposition:
\begin{equation*}
\DB(X_{\underline{b}})^{\mathbb{Z}_3} = \langle \A_{\underline{b}}, \OO_{X_{\underline{b}}}, \ldots, \OO_{X_{\underline{b}}}(5) \rangle,
\end{equation*}
where $\DB(X_{\underline{b}})^{\mathbb{Z}_3}$ is the derived category of $\mathbb{Z}_3$-equivariant coherent sheaves on $X_{\underline{b}}$. In case $X_{\underline{b}}$ is smooth, the category $\A_{\underline{b}}$ is Calabi-Yau of dimension $3$. It is announces in \cite{sherismith} that (for the right choice of $\underline{b}$) there is an equivalence between $\A_{\underline{b}}$ and a version of the Fukaya category of $Z_2$. It would be interesting to check if their techniques can be applied to answer the third item of question \ref{questcubic}.
\end{rem}

\end{subsubsection}

\begin{subsubsection}{Double quartic fivefold}
The story for the double quartic fivefold is very similar to that of the cubic sevenfold. Namely, let $T = E \times E \times E$ be the triple product of an elliptic curve $E$, given by the equation $\{Z_1^4+Z_2^4 + Z_3^2  = 0 \} \subset \mathbb{P}(1,1,2) $ and let $\mathbb{Z}_4 \times \mathbb{Z}_4$ acts on $T$ as follows:

\begin{equation*}
\begin{split}
(1,0) . (z_1,z_2,z_3,z_4,z_5,z_6,z_7,z_8,z_9,) & = (-i.z_1, z_2,-z_3,z_4,z_5,z_6,i.z_7,z_8,-z_9 \\
(0,1) . (z_1,z_2,z_3,z_4,z_5,z_6,z_7,z_8,z_9) & = (z_1,z_2,z_3,- i.z_4,z_5,-z_6, i.z_7,z_8,-z_9),
\end{split}
\end{equation*} 
where $i$ is a square root of $-1$. The quotient $T/\mathbb{Z}_4 \times \mathbb{Z}_4$ has crepant resolution which is a Calabi-Yau threefold. We denote it by $Z_3$. The Hodge diamond of $Z_3$ is:

\begin{center}
\begin{tabular}{ccccccc} 
& & & 1 & & & \\
& & 0& &0 & &  \\
& 0& &90 & &0 & \\
1& &0 & &0 & &1 \\
& 0& &90 & &0 & \\
& & 0& &0 & &  \\
& & & 1 & & & \\
\end{tabular}
\end{center}

As explained in \cite{CDP, sch, BBVW}, the mirror of $Z_3$ ought to be a Landau-Ginzburg model related to a smooth double quartic fivefold. Let $X_3 \subset \mathbb{P}(1,1,1,1,1,1,2)$ be a smooth double quartic fivefold. We have a semi-orthogonal decomposition:

\begin{equation*}
\DB(X_3) = \langle \A_3, \OO_{X_1}, \ldots, \OO_{X_1}(3) \rangle,
\end{equation*} 
where $\A_{3}$ is a Calabi-Yau category of dimension $3$. It follows from \cite{orlov1} that $\A_3$ is the homotopy category of the $DG$-category of graded matrix factorizations of the equation of $X_1$. Therefore, the category $\A_3$ can be interpreted as a Landau-Ginzburg model for the double quartic fivefold $X_3$. We found out in example \ref{exemhodgenumbers} that the Hodge diamond of $\A_3$ is:

\begin{center}
\begin{tabular}{ccccccc} 
& & & 1 & & & \\
& & 0& &0 & &  \\
& 0& &0 & &0 & \\
1& &90 & &90 & &1 \\
& 0& &0 & &0 & \\
& & 0& &0 & &  \\
& & & 1 & & & \\
\end{tabular}
\end{center}
which is again a favorable presage as far as mirror symmetry is concerned. 

\bigskip

We have shown in example \ref{exem1} that there exists a $3$-spherical bundle $\F_{X_3}$ on $X_3$ such that $\F_{X_3}(-1)$ and $\F_{X_3}(-2)$ are in $\A_3$ and that the Chern characters of these two bundles generate $\HH_{0}(\A_3)$. We ask for $X_3$ the analogous question to \ref{questcubic}:

\begin{quest} \label{questquartic}
\begin{enumerate}
\item Do the objectis $\F_{X_3}(-1)$ and $\F_{X_3}(-2)$ split-generate the category $\A_3$?

\item For $n \geq 0$, denote by $T_{\F_{X_3}(-2)}^{n}$ the $n$-th composition with itself of the spherical twist along $\F_{X_3}(-2)$. Can we reconstruct a smooth elliptic curve, say $E$, and an action of $\mathbb{Z}_4 \times \mathbb{Z}_4$ on $E \times E \times E$ from the ring $\ds \bigoplus_{n \geq 0} \mathrm{Hom}(\F_{X_3}(-1), T_{\F_{X_3}(-2)}^{n}(\F_{X_3}(-1)))$?

\item Is there a smooth double quartic fivefold $X_3$ such that the Fukaya category of $Z_3$ is equivalent to the category of $\A_{\infty}$-modules over the $\A_{\infty}$-algebra $\mathrm{RHom}(\F_{X_3}(-1) \oplus \F_{X_3}(-2), \F_{X_3}(-1) \oplus \F_{X_1}(-2))$?
\end{enumerate}
\end{quest}
It would be again interesting to know if the techniques developed in \cite{sherismith} could be used to answer the third item of question \ref{questquartic}.

 \end{subsubsection}
\end{subsection}
\end{section}

\newpage

\bibliographystyle{alpha}
\bibliography{HKR}
\newpage
\end{document}